\newdimen\AAdi%
\newbox\AAbo%
\def\AAk#1#2{\s_etbox\AAbo=\hbox{#2}\AAdi=\wd\AAbo\kern#1\AAdi{}}%
\def\AAr#1#2#3{\s_etbox\AAbo=\hbox{#2}\AAdi=\ht\AAbo\raise#1\AAdi\hbox{#3}}%
\font\tenmsb=msbm10 at 12pt \font\sevenmsb=msbm7 at 8pt
\font\fivemsb=msbm5 at 6pt
\def\Bbb#1{{\tenmsb\fam\msbfam#1}}
\newtheorem{thm}{Theorem}[section]
\newtheorem{lem}{Lemma}[section]
\newtheorem{pro}{Proposition}[section]
\newcommand{\ba}{\begin{array}}
\newcommand{\ea}{\end{array}}
\newcommand{\Section}[2]{\setcounter{equation}{0}
\allowdisplaybreaks
\section[#1]{#2}}
\def\n{\nabla}
\def\bn{\overline\nabla}
\def\ir#1{\mathbb R^{#1}}
\def\f#1#2{\frac{#1}{#2}}
\def\dt#1{\frac {d\,#1}{d\,t}}
\def\mc#1{\mathcal{#1}}
\def\a{\alpha}
\def\be{\beta}
\def\r{\rho}
\def\p#1{\partial #1}
\def\de{\delta}
\def\De{\Delta}
\def\e{\eta}
\def\ep{\varepsilon}
\def\G{\Gamma}
\def\g{\gamma}
\def\k{\kappa}
\def\la{\lambda}
\def\Om{\Omega}
\def\R{\Bbb{R}}
\begin{document}

\title
{The Rigidity Theorems of Self Shrinkers}
\author{Qi Ding}
\author{Y.L. Xin}
\address{Institute of Mathematics, Fudan University,
Shanghai 200433, China} \email{09110180013@fudan.edu.cn}
\email{ylxin@fudan.edu.cn}
\thanks{The research was partially supported by
NSFC}

\begin{abstract}
By using certain idea developed in minimal submanifold theory we
study rigidity problem for self-shrinkers in the present paper. We
prove rigidity results for squared norm of the second fundamental
form of self-shrinkers, either under point-wise conditions or
under integral conditions.
\end{abstract}

\maketitle

\Section{Introduction}{Introduction}

\medskip

The subject of self-shrinkers are closely related with the theory
of minimal submanifolds, as shown by previous works \cite{CM1}
and \cite{CM2}.

There are intrinsic rigidity and extrinsic rigidity  for minimal
submanifolds in the unit sphere.  The intrinsic rigidity implies gap
property of the scalar curvature, so is the squared norm of the
second fundamental form by the Gauss equations. The extrinsic
rigidity describes the gap phenomenon for the image of the Gauss
maps. Both properties of minimal submanifolds were initialed  by J.
Simons in his fundamental paper \cite{Si}. Since then, the extensive
works appeared to contribute to this interesting problem.

Besides the interest in the own right, the rigidity problem in the
sphere is also related to the Benstein problem for minimal
submanifolds in the Euclidean space \cite{X2}.

We now  study the rigidity problem for self-shrinkers. Now, there is
no intrinsic rigidity. However, there also exist gap phenomena for
the squared norm of the second fundamental form  and the image of
the Gauss maps. In the present paper we only pay attention to the
gap phenomenon  for the squared norm of the second fundamental form.
As for the gap phenomenon for the image under the Gauss maps we will
write another paper to contribute to the problem.

The first gap of the squared norm of the second fundamental form for
self-shrinkers was obtained by Cao-Li \cite{CaL} (which generalized
codimension $1$ case in \cite{L-S}).

Chern-doCarmo-Kobayashi in \cite{CDK} confirmed that the Simons
first gap in \cite{Si} is sharp and raised to study the subsequent
gaps.  Peng-Terng in \cite{PT1} and \cite{PT2} studied the second
gap of squared norm of the second fundamental form for compact
minimal hypersurfaces in a unit sphere. They obtained pinching
results for minimal hypersurfaces of constant scalar curvature in
any dimension and that without the constant scalar curvature
assumption in lower dimensions. After that, there are many works on
this problem. Recently, we confirm the second gap in any dimension
without constant scalar curvature assumption \cite{DX1}.

In the present paper we employ  the similar idea in our previous
work \cite{DX1} to study the second gap for self-shrinkers. The
results will be given in the Theorem \ref{sec}. We also study the
self-shrinker surfaces in $\ir{3}$ with constant squared norm of
the second fundamental form. They can be classified, as shown in
the Theorem 4.2.

By using Sobolev's inequality, Ni \cite{N} proved  gap results for
minimal hypersurfaces under the integral conditions on the squared
norm of the second fundamental form. For self-shrinkers , there is
also Sobolev's inequality, which can be used to obtain gap results
for self-shrinkers,  in a  manner analog to  that in \cite{N},  as
shown in Theorem \ref{int}.  But, our direct integral estimates
apply to arbitrary codimension not only for self-shrinkers, but also
for minimal submanifolds with corresponding modifications.

The organization  of the present article is as follows: In next
section, we fix the notations and derive basic formulas in a manner
as in \cite{X2}, which will be used in the later sections. In \S 3,
we prove  rigidity results in higher codimension. In the final
section, we give rigidity results in codimension $1$.

\Section{Preliminaries}{Preliminaries}

\medskip

Let $M$ be an $n$-dimensional Riemannian manifold, and $X: M
\rightarrow\ir{m+n}$ be an isometric immersion. Let $\n$ and
$\bn$ be Levi-Civita connections on $M$ and $\R^{m+n}$,
respectively. The second fundamental form $B$ is defined by
$B(V,W)=(\bn_VW)^N=\bn_VW-\n_VW$ for any vector fields $V,W$
along the submanifold $M$, where $(\cdots)^N$ is the projection
onto the normal bundle $NM$. Similarly, $(\cdots)^T$ stands for
the tangential projection.  Taking the trace of $B$ gives the
mean curvature vector $H$ of $M$ in $\ir{m+n}$,  a cross-section
of the normal bundle. In what follows we use $\n$ for natural
connections on various bundles for notational simplicity if there
is no ambiguity from the context. For $\nu\in\G(NM)$ the shape
operator $A^\nu: TM\to TM$, defined by $A^\nu(V)=-(\bn_V\nu)^T$,
satisfies $\left<B_{V W}, \nu\right>=\left<A^\nu(V), W\right>.$

The second fundamental form, curvature tensors of the
submanifold, curvature tensor of the normal bundle and that of
the ambient manifold satisfy the Gauss equations, the Codazzi
equations and the Ricci equations.

We now consider the mean curvature flow for a submanifold  $M$ in
$\ir{m+n}.$ Namely, consider a one-parameter family $X_t=X(\cdot,
t)$ of immersions $X_t:M\to \ir{m+n}$ with corresponding images
$M_t=X_t(M)$ such that
\begin{equation*}\left\{\begin{split}
\dt{}X(x, t)&=H(x, t),\qquad x\in M\\
X(x, 0)&=X(x)
\end{split}\right.
\end{equation*}
is satisfied, where $H(x, t)$ is the mean curvature vector of
$M_t$ at $X(x, t)$ in $\ir{m+n}.$

An important class of solutions to the above mean curvature flow
equations are self-similar shrinkers, whose profiles,
self-shrinkers, satisfy a system of quasi-linear elliptic PDE of the
second order
\begin{equation}\label{SS}
H = -\frac{X^N}{2}.
\end{equation}

Let $\De$, $\mathrm{div}$ and $d\mu$ be Laplacian, divergence and
volume element on $M$, respectively. Colding and Minicozzi in
\cite{CM1} introduced a linear operator
\begin{equation*}
\mc{L}=\De-\frac{1}{2}\langle
X,\n(\cdot)\rangle=e^{\f{|X|^2}4}\mathrm{div}(e^{-\f{|X|^2}4}\n(\cdot))
\end{equation*}
on self-shrinkers. They showed that $\mc{L}$ is self-adjoint
respect to the measure $e^{-\f{|X|^2}4}d\mu.$  In the present
paper we carry out integrations with respect to this measure. We
denote $\rho=e^{-\f{|X|^2}4}$ and the volume form
 $d\mu$ might be omitted in the integrations for notational simplicity.

In this section we derive several basic formulas for
self-shrinkers. Some of them have been known in the literature.
For convenience, we derive them here in our notations.

For minimal submanifolds in an arbitrary ambient Riemannian
manifold J.Simons \cite{Si} derived the Laplacian of the squared
norm of the second fundamental form. For arbitrary submanifolds
in Euclidean space Simons type formula was also  derived (see
\cite{Sm}, \cite{X1}, for example).

Choose a local orthonormal frame field $\{e_i,e_\a\}$ along $M$
with dual frame field $\{\omega_i,\omega_\a\}$, such that $e_i$
are tangent vectors of $M$ and $e_{\a}$ are normal to $M$. The
induced Riemannian metric of $M$ is given by $ds_M^2
=\sum_i\omega_i^2$ and the induced structure equations of $M$ are
\begin{equation*}\begin{split}
   & d\omega_i = \omega_{ij}\wedge\omega_j,\qquad
                 \omega_{ij}+\omega_{ji} = 0,\cr
   &d\omega_{ij}= \omega_{ik}\wedge\omega_{kj}+\omega_{i\a}\wedge\omega_{\a j},\cr
   &\Omega_{ij} = d\omega_{ij}-\omega_{ik}\wedge\omega_{kj}
                = -\frac 12 R_{ijkl}\omega_k\wedge\omega_l.
\end{split}\end{equation*}
By Cartan's lemma we have
$$\omega_{\a i} = h_{\a ij}\omega_j.$$
Here and in the sequel we agree with the following range of
indices
$$1\le i, j, k, \cdots \le n,\quad n+1\le \a, \be, \g, \cdots \le n+m.$$

Set
$$B_{ij}=B_{e_ie_j}=(\overline{\n}_{e_i}e_j)^N=h_{\a ij}e_\a,\quad S_{\a\be}=h_{\a ij}h_{\be ij}.$$
Then, $$|B|^2=\sum_{\a}S_{\a\a}.$$

From Proposition 2.2 in \cite{X1} we have
\begin{equation}\label{2.2}\begin{split}
\De|B|^2=2\,|\n B|^2+2\,\left<\n_i\n_jH,B_{ij}\right>
       &+\,2\left<B_{ij},H\right>\left<B_{ik},B_{jk}\right>\\
       &\quad -\,2\,\sum_{\a\ne\be}|[A^{e_\a}, A^{e_\be}]|^2-2\,\sum_{\a,\be}S_{\a\be}^2.
\end{split}\end{equation}

We suppose that the local orthonormal frame field
$\{e_i\}_{i=1}^{n}$ is normal at a considered point $p\in M$. From
the self-shrinker equations (\ref{SS}) we obtain
\begin{equation}\label{DH}
\n_jH=\f{1}{2}\left<X, e_k\right>B_{jk},
\end{equation}
and
\begin{equation}\label{D2H}
\n_i\n_j H=\f{1}{2}B_{ij}-\langle H, B_{ik}\rangle
B_{jk}+\f{1}{2}\langle X,e_k\rangle\n_i B_{jk}.
\end{equation}
Combining (\ref{2.2}) and (\ref{D2H})(and using the Codazzi  equation), we have
\begin{equation}\aligned\label{LB}
\mc{L}|B|^2=2|\n B|^2+|B|^2-\,2\,\sum_{\a\ne\be}|[A^{e_\a},
A^{e_\be}]|^2-2\,\sum_{\a,\be}S_{\a\be}^2.
\endaligned
\end{equation}
This is the self-shrinker version of the well-known Simons'
identity. In particular, when the codimension $m=1$, the above
Simons' type identity reduces to the following one:

\begin{equation}\label{LB1}
\mc{L}|B|^2=2|\n B|^2 + 2|B|^2\left(\f{1}{2}- |B|^2\right).
\end{equation}

In general, we know from \cite{Si}
$$\sum_{\a\ne\be}|[A^{e_\a}, A^{e_\be}]|^2+\sum_{\a, \be}S_{\a\be}^2\le \left(2-\frac 1n\right)|B|^4.$$
When the codimension $m\ge 2$ the above estimate was refined
\cite{LL}\cite{ChenX}
$$\sum_{\a\ne\be}|[A^{e_\a}, A^{e_\be}]|^2+\sum_{\a, \be}S_{\a\be}^2\le\frac 32 |B|^4.$$
Combining (\ref{LB}) and the above inequality, we have
\begin{equation}\aligned\label{LB2}
\mathcal{L}|B|^2\ge 2|\n|B||^2+|B|^2-3|B|^4,
\endaligned
\end{equation}
here we use rough estimates $|\n B|^2\ge |\n|B||^2$. It can be
refined by so-called Kato's type inequality.

From (\ref{D2H}) (and using the Codazzi  equation) we have
\begin{equation*}\aligned
\De|H|^2&=2\langle H,\n^2 H\rangle+2|\n H|^2\\
&=\left\langle H, H-2\langle H, B_{ik}\rangle
B_{ik}+\langle X,e_k\rangle\n_{e_k} H\right\rangle+2|\n H|^2\\
&=|H|^2-2\sum_{i,j}|\langle H,B_{ij}\rangle|^2+\f{1}{2}\langle X,\n|H|^2\rangle+2|\n H|^2.\\
\endaligned
\end{equation*}
It follows that
\begin{equation}\label{LH}
\mc{L}|H|^2=|H|^2-2\sum_{i,j}|\langle H,B_{ij}\rangle|^2+2|\n
H|^2.
\end{equation}

\bigskip

\Section{Rigidity results in High Codimension}{Rigidity results
in High Codimension}

\medskip

First of all we use formula (\ref{LH}) to obtain a rigidity
result for the squared norm of the second fundamental form which
was already known \cite{CaL}.
\begin{pro}\label{1gap}
Let $M^n$ be a complete properly immersed self-shrinker in
$\R^{n+m}$ with $|B|^2\leq\frac{1}{2}$, then either $|B|\equiv0$,
and $M$ is a n-plane or $|B|^2\equiv\frac{1}{2}$, and $M$ is a
product $S^k(\sqrt{2k})\times\R^{n-k}$ for $1\leq k\leq n$.
\end{pro}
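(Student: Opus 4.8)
The plan is to run the argument on $|H|^2$ rather than on $|B|^2$ directly. Under the hypothesis $|B|^2\le\f12$ the pointwise Simons estimate behind (\ref{LB2}) is too weak to force $\mc{L}|B|^2\ge0$ when the codimension $m\ge2$, whereas the formula (\ref{LH}) for $|H|^2$ cooperates perfectly. First I would bound the middle term of (\ref{LH}) by Cauchy--Schwarz, $\sum_{i,j}|\langle H,B_{ij}\rangle|^2\le|H|^2\sum_{i,j}|B_{ij}|^2=|H|^2|B|^2\le\f12|H|^2$, so that (\ref{LH}) gives $\mc{L}|H|^2\ge|H|^2-|H|^2+2|\n H|^2=2|\n H|^2\ge0$. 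Thus $|H|^2$ is $\mc{L}$-subharmonic.

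Next I would promote this differential inequality to rigidity by integrating against the weight $\rho\,d\mu$. Since $M$ is complete and properly immersed it has polynomial volume growth, so $\int_M|X|^2\rho<\infty$ and hence $\int_M|H|^2\rho=\f14\int_M|X^N|^2\rho<\infty$ by the self-shrinker equation (\ref{SS}). Choosing a cutoff $\phi$ equal to $1$ on $B_R$, supported in $B_{2R}$, with $|\n\phi|\le C/R$, multiplying $\mc{L}|H|^2\ge2|\n H|^2$ by $\phi^2$ and integrating by parts (using $\mc{L}=\rho^{-1}\mathrm{div}(\rho\,\n(\cdot))$), together with $|\n|H|^2|\le2|H|\,|\n H|$ and Young's inequality, should yield $\int\phi^2|\n H|^2\rho\le CR^{-2}\int_{B_{2R}}|H|^2\rho\to0$ as $R\to\infty$. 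Therefore $\n H\equiv0$, so $|H|$ is constant, $\mc{L}|H|^2\equiv0$, and feeding this back into (\ref{LH}) makes the Cauchy--Schwarz step an equality, i.e. $|H|^2(2|B|^2-1)\equiv0$ pointwise.

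This leaves a clean dichotomy. If $|H|\equiv0$ then (\ref{SS}) gives $X^N\equiv0$, so $M$ is a complete smooth minimal cone, forcing $M$ to be an $n$-plane with $|B|\equiv0$. Otherwise $|H|^2\equiv c>0$ everywhere, and the pointwise equality forces $|B|^2\equiv\f12$. In this second case I would extract the product structure from (\ref{DH}): since $\n H\equiv0$, (\ref{DH}) reads $B(X^T,e_j)=\f12\langle X,e_k\rangle B_{jk}=0$ for all $j$, so the tangential position vector $X^T$ lies in the relative nullity of $B$; combined with $|B|^2\equiv\f12$ and parallel $H$, this should produce an isometric splitting $M=M'^k\times\ir{n-k}$ along the nullity distribution, with $M'$ a compact self-shrinker of constant $|B|^2=\f12$, which one identifies with the round sphere $S^k(\sqrt{2k})$.

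The main obstacle is this last classification of the equality case: converting the pointwise nullity condition into a genuine Riemannian product and pinning the compact factor down to $S^k(\sqrt{2k})$ requires a reduction-of-codimension/splitting argument and the rigidity of the round sphere, and is considerably more delicate than the subharmonicity and integration steps. A secondary technical point is justifying the vanishing of the boundary term in the integration by parts, where the polynomial volume growth coming from properness is essential.
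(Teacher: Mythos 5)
Your first two steps coincide with the paper's argument: the paper also works with \eqref{LH}, bounds $\sum_{i,j}|\langle H,B_{ij}\rangle|^2\le|H|^2|B|^2\le\frac12|H|^2$, multiplies by $\eta^2\rho$, integrates by parts against the weighted measure, and uses the Euclidean volume growth of properly immersed self-shrinkers from \cite{DX2} together with $|H|\le\frac12|X|$ to kill the cutoff term, concluding $\n H\equiv0$, $|H|$ constant, and $|H|^2(\frac12-|B|^2)\equiv0$. Up to the reorganization (you first get $\n H\equiv 0$ and then feed back to obtain equality in Cauchy--Schwarz; the paper keeps the term $\int|H|^2(\frac12-|B|^2)\eta^2\rho$ in the inequality from the start), this part is sound and essentially identical.

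The genuine gap is in the classification of the case $|H|^2\equiv c>0$, $|B|^2\equiv\frac12$. Your proposed route --- reading off from \eqref{DH} that $B(X^T,\cdot)=0$ and splitting ``along the relative nullity'' --- does not work. The condition $B(X^T,\cdot)=0$ is vacuous precisely on the model $S^n(\sqrt{2n})$, where $X^T\equiv0$, so it cannot detect which directions are flat; more importantly, a pointwise nullity condition does not yield a parallel distribution of constant rank, which is what a de Rham--type splitting requires, and you have not reduced the codimension, so no hypersurface splitting theorem applies. The paper's proof supplies exactly the two missing ingredients. First, equality in the Cauchy--Schwarz step \eqref{HB} forces $B_{ij}=\langle B_{ij},\nu\rangle\nu$ with $\nu=H/|H|$, i.e.\ the first normal space is one-dimensional; Yau's reduction-of-codimension theorem (\cite{Y}, Theorem~1) then places $M$ in an affine $\R^{n+1}$. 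Second, once in codimension one, the Simons identity \eqref{LB1} with $|B|^2\equiv\frac12$ constant gives $0=\mc{L}|B|^2=2|\n B|^2$, hence $\n B\equiv0$; Lawson's local rigidity theorem (\cite{L}, Theorem~4) for hypersurfaces with parallel second fundamental form then yields the isometric splitting $S^k\times\R^{n-k}$, and the self-shrinker equation \eqref{SS} fixes the radius $\sqrt{2k}$ and the center at the origin. Without the codimension reduction and the derivation of $\n B\equiv0$, your argument cannot reach the stated product structure.
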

\begin{proof}
Let $\eta$ be a smooth function with compact support in $M$, then
by (\ref{LH}), we have
\begin{equation}\aligned\label{2.7}
&\int_M(\frac{1}{2}|H|^2-\sum_{i,j}|\langle H,B_{ij}\rangle|^2+|\nabla H|^2)\eta^2\rho\\
=&\frac{1}{2}\int_M(\mc{L}|H|^2)\eta^2\rho
=\frac{1}{2}\int_M\mathrm{div}(\rho\nabla|H|^2)\eta^2\\
=&-\int_M\eta\rho\nabla|H|^2\cdot\nabla\eta
\leq\frac{1}{2}\int_M|\nabla
H|^2\eta^2\rho+2\int_M|H|^2|\nabla\eta|^2\rho.
\endaligned
\end{equation}
Since
\begin{equation}\aligned\label{HB}
\sum_{i,j}|\langle H,B_{ij}\rangle|^2\le |H|^2|B|^2,
\endaligned
\end{equation}
we then have
\begin{equation}\aligned\label{pinch}
\int_M|H|^2(\frac{1}{2}-|B|^2)\eta^2\rho+\frac{1}{2}\int_M|\nabla
H|^2\eta^2\rho\leq 2\int_M|H|^2|\nabla\eta|^2\rho.
\endaligned
\end{equation}
If $M$ is compact, we choose $\eta\equiv 1$, otherwise, let
$\eta(X)=\eta_r(X)=\phi(\f{|X|}r)$ for any $r>0$, where $\phi$ is a
nonnegative function on $[0,+\infty)$ satisfying
\begin{eqnarray}\label{eta}
   \phi(x)= \left\{\begin{array}{ccc}
     1     & \quad\ \ \ {\rm{if}} \ \ \  x\in [0,1) \\ [3mm]
     0  & \quad\quad\  {\rm{if}} \ \ \  x\in [2,+\infty),
     \end{array}\right.
\end{eqnarray}
and $|\phi'|\le C$ for some absolute constant.
Noting the Euclidean volume growth of $M$ by our previous result in
\cite{DX2} and $|H|\le\f12|X|$, the right hand side of \eqref{pinch}
approaches to zero as $r\rightarrow+\infty$. This implies that
$H^2(\frac{1}{2}-|B|^2)\equiv0$ and $|\nabla H|\equiv0$. Since
$\nabla|H|^2=2\langle H,\nabla H\rangle$, then $|H|$ is a constant.
If $|H|\equiv0$, then $M$ is a $n-$plane. Otherwise $|H|>0$ and
$|B|^2=\frac{1}{2}$. Moreover, \eqref{HB} takes equality, which
implies $B_{ij}=\langle B_{ij},\nu\rangle\nu$ for any $i,j$,
$\nu=\f{H}{|H|}$. By Theorem 1 of Yau in \cite{Y}, $M$ lies  some
$n+1$-dimensional linear subspace $\R^{n+1}$. From (\ref{LB1}),
$|\nabla B|\equiv0$ which implies that the eigenvalues of $B$ are
constants on $M$. In Theorem 4 of \cite{L}, Lawson showed that every
smooth hypersurface with $\nabla B= 0$ splits isometrically as a
product of a sphere and a linear space(i.e. $S^k\times \R^{n-k}$).
Furthemore, by the  self-shrinker equation \eqref{SS}, the
$k$-dimensional sphere should has the radius  $\sqrt{2k}$ and
centered at the origin.
\end{proof}


There is a Sobolev inequality (see  \cite{MS})
 as follows
\begin{equation}\aligned\label{2.8}
\k^{-1}\left(\int_Mg^\frac{2n}{n-2}\mathrm{d}\mu\right)^\frac{n-2}{n}\leq\int_M|\nabla
g|^2\mathrm{d}\mu+\frac{1}{2}\int_M|H|^2g^2\mathrm{d}\mu,\qquad
\forall g\in C_c^\infty(M),
\endaligned
\end{equation}
where  $\kappa>0$ is a constant.  Besides using (\ref{2.8}), the
Simons type inequality in self-shrinker version \eqref{LB2} would
be used in the following result.
\begin{thm}\label{int}
Let $M^n$ be a complete immersed self-shrinker in $\R^{n+m}.$ If
$M$ satisfies an integral condition
$\left(\int_M|B|^n\mathrm{d}\mu\right)^{1/n}<\sqrt{\frac{4}{3n\k}}$,
then $|B|\equiv0$ and $M$ is a linear subspace.
\end{thm}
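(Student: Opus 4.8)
The plan is to transplant the Michael--Simon--Simons scheme of \cite{N} to the self-shrinker setting; the two new features to be handled are the drift part of $\mc{L}$ and the mean curvature term in the Sobolev inequality (\ref{2.8}). First I would turn the Simons-type inequality (\ref{LB2}) into a scalar differential inequality for $f=|B|$. Writing $\mc{L}|B|^2=2f\mc{L}f+2|\n f|^2$ and using $|\n|B||=|\n f|$, (\ref{LB2}) becomes, at points where $f>0$,
\begin{equation*}
f\mc{L}f\ge \f{1}{2}f^2-\f{3}{2}f^4,\qquad\text{equivalently}\qquad \mc{L}f\ge\f12 f-\f32 f^3.
\end{equation*}
Since $\mc{L}=\De-\f12\langle X,\n(\cdot)\rangle$, this is an inequality for the ordinary Laplacian together with a first order drift, which is the form compatible with the \emph{unweighted} inequality (\ref{2.8}).

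The decisive step is to test this inequality not against $f\eta^2$ but against $f^{n-1}\eta^2$, with $\eta$ a cut-off, integrating over $M$ with respect to $d\mu$. When the drift term is integrated by parts one meets $\mathrm{div}_M X^T=n-2|H|^2$ (a consequence of $\bn_{e_i}X=e_i$ together with the self-shrinker equation (\ref{SS})); with test exponent $n-1$ this divergence contributes exactly $+\f12\int_M f^{n}\eta^2$, which cancels the term $-\f12\int_M f^{n}\eta^2$ produced by the coercive $\f12 f^2$ in (\ref{LB2}). The lower order integral thus disappears, which is where the dimensional constant $n$ of the threshold originates and why no pointwise hypothesis on $|B|$ is needed. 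What survives is a gradient estimate of the shape
\begin{equation*}
(n-1)\int_M f^{n-2}|\n f|^2\eta^2+\f1n\int_M f^{n}|H|^2\eta^2\le \f32\int_M f^{n+2}\eta^2+(\text{cut-off error}).
\end{equation*}

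Next I would apply (\ref{2.8}) to $g=f^{n/2}\eta$, noting $|\n f^{n/2}|^2=\f{n^2}{4}f^{n-2}|\n f|^2$. Feeding in the gradient estimate above and absorbing the cross term $\int_M f^{n/2}\eta\,\n f^{n/2}\cdot\n\eta$ by Young's inequality, the mean curvature term $\f12\int_M|H|^2(f^{n/2}\eta)^2$ of (\ref{2.8}) is partly offset by the favourable term $\f1n\int_M f^{n}|H|^2\eta^2$ above, and the remainder is controlled by the elementary bound $|H|^2\le n|B|^2$ (Cauchy--Schwarz on $H=\tr B$). Collecting everything yields
\begin{equation*}
\k^{-1}\left(\int_M (f^{n/2}\eta)^{\f{2n}{n-2}}\,d\mu\right)^{\f{n-2}{n}}\le C_n\int_M f^{n+2}\eta^2\,d\mu+(\text{cut-off error}),
\end{equation*}
with $C_n$ a dimensional constant. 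Hölder's inequality with exponents $\f n2,\f{n}{n-2}$ gives
\begin{equation*}
\int_M f^{n+2}\eta^2\,d\mu\le \left(\int_M|B|^n\,d\mu\right)^{2/n}\left(\int_M (f^{n/2}\eta)^{\f{2n}{n-2}}\,d\mu\right)^{\f{n-2}{n}},
\end{equation*}
so the right-hand side is absorbed into the left precisely when $\big(\int_M|B|^n\,d\mu\big)^{2/n}<\f{1}{C_n\k}$; the bookkeeping is arranged so that $C_n=\f{3n}{4}$, giving the stated threshold $\f{4}{3n\k}$.

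Once the main term is absorbed, the surviving inequality is $\k^{-1}(\int_M(f^{n/2}\eta)^{2n/(n-2)}\,d\mu)^{(n-2)/n}\le(\text{cut-off error})$. Taking $\eta=\eta_r$ as in (\ref{eta}), so that $|\n\eta_r|\le C/r$, the error is bounded by a constant times $\int_{\{r\le|X|\le 2r\}}|B|^{n}\,d\mu$, which tends to $0$ as $r\to\infty$ because $|B|\in L^n(M)$; only completeness is used here, not properness or a volume growth estimate. Letting $r\to\infty$ forces $f^{n/2}\equiv0$, hence $|B|\equiv0$, so $M$ is totally geodesic and therefore a linear subspace (through the origin by (\ref{SS})). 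I expect the real difficulty to be the two middle steps: organizing the integration by parts so that the drift term, the coercive term $\f12|B|^2$, and the mean curvature term of (\ref{2.8}) together both annihilate the lower order integral $\int_M f^n\eta^2$ and leave the constant $\f{4}{3n\k}$, and checking along the way that every $|H|^2$ contribution is dominated rather than surviving with the wrong sign.
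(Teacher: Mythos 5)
Your argument is correct and lands on the sharp constant $\tfrac{3n}{4}$, but it is not the route the paper takes; the difference sits exactly in the two ``middle steps'' you single out. The paper never leaves the weighted framework: it multiplies \eqref{LB2} by $|B|^{n-2}\eta^2$ and integrates against $\rho\,d\mu=e^{-|X|^2/4}d\mu$, so the drift of $\mc{L}$ never appears explicitly, and it then feeds $f=|B|^{n/2}\rho^{1/2}\eta$ (note the extra $\rho^{1/2}$) into the unweighted Sobolev inequality (\ref{2.8}). The price is the computation of $\int|\n f|^2$, which via $\De\rho=-\tfrac n2\rho+\tfrac{\rho}4|X|^2$ (i.e.\ $\De|X|^2=2n-|X^N|^2$) produces a term $-\tfrac18\int|B|^n\eta^2|X^N|^2\rho$; since $|H|^2=\tfrac14|X^N|^2$, this cancels the mean-curvature term of (\ref{2.8}) exactly, while the leftover $+\tfrac n4\int|B|^n\eta^2\rho$ is beaten by the coercive $-\int|B|^n\eta^2\rho$ coming from \eqref{LB2} after the choice $\de=2\tfrac{n-1+\ep}{n}-1$. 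Your version works unweighted throughout, kills the zeroth-order integral by $\mathrm{div}_MX^T=n-2|H|^2$, and offsets the Sobolev $|H|^2$-term by the favourable $-\tfrac1n\int|B|^n|H|^2\eta^2$ that the same identity produces; choosing $1+\de$ close to $\tfrac{2(n-1)}{n}$ makes the $|H|^2$-coefficient vanish and the leading coefficient equal $\tfrac{2(n-1)}{n}\cdot\tfrac{n^2}{4(n-1)}\cdot\tfrac32=\tfrac{3n}{4}$, so you do reach the stated threshold. The two computations encode the same geometry (note $-2\De\rho=\mathrm{div}(\rho X^T)$): yours is slightly more elementary, needs no identity for $\De\rho$, and makes transparent why the coercive term $\tfrac12|B|^2$ in \eqref{LB2} contributes nothing to the final constant; the paper's keeps every integration by parts against the natural self-adjoint measure of $\mc{L}$. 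Two points to tidy up: the scalar inequality $\mc{L}|B|\ge\tfrac12|B|-\tfrac32|B|^3$ holds only where $|B|>0$, so you should either test \eqref{LB2} directly with $|B|^{n-2}\eta^2$ as the paper does or regularize $|B|$ first; and the extrinsic cut-off $\phi(|X|/r)$ of (\ref{eta}) is compactly supported only for proper immersions, so in the merely complete case you should switch to an intrinsic cut-off $\phi(d(\cdot,x_0)/r)$, for which $|X^T|\,|\n\eta|$ is still bounded and your error estimate $\int_{\{\n\eta\neq0\}}|B|^n\to0$ goes through unchanged.
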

\begin{proof}
Let $\eta$ be a smooth function with compact support in $M$.
Multiplying $\eta^2|B|^{n-2}$ on both sides of \eqref{LB2} and
integrating by parts yield
\begin{equation}\aligned
0\ge&2\int_M|\n|B||^2|B|^{n-2}\e^2\rho+\int_M|B|^n\e^2\r-3\int_M|B|^{n+2}\e^2\rho-\int_M\e^2|B|^{n-2}\mathcal{L}|B|^2\\
=&2\int_M|\n|B||^2|B|^{n-2}\e^2\rho+\int_M|B|^n\e^2\rho-3\int_M|B|^{n+2}\e^2\rho\\
&+2\int_M|B|\r\n|B|\cdot\n(|B|^{n-2}\e^2)\\
=&2(n-1)\int_M|\n|B||^2|B|^{n-2}\e^2\rho+\int_M|B|^n\e^2\rho-3\int_M|B|^{n+2}\e^2\rho\\
&+4\int_M(\n|B|\cdot\n\e)|B|^{n-1}\eta\rho.\\
\endaligned
\end{equation}
By Cauchy inequality, for any $\ep>0$, we have
\begin{equation}\aligned\label{2.10}
3\int_M|B|^{n+2}\e^2\rho-\int_M|B|^n\e^2\rho&+\frac{2}{\ep}\int_M|B|^n|\n\e|^2\r\\
\ge&2(n-1-\ep)\int_M|\n|B||^2|B|^{n-2}\e^2\rho.\\
\endaligned
\end{equation}
Let $f=|B|^{n/2}\r^{1/2}\e.$ Integrating by parts, then we
have
\begin{equation}\aligned\label{2.11}
\int_M|\n f|^2=&\int_M|\n(|B|^{n/2}\e)|^2\r+\f12\int_M\n (|B|^n\e^2)\cdot\n\r+\int_M|B|^n\e^2|\n\r^{1/2}|^2\\
=&\int_M|\n(|B|^{n/2}\e)|^2\r-\f12\int_M|B|^n\e^2\De\r+\f1{16}\int_M|B|^n\e^2|X^T|^2\r.
\endaligned
\end{equation}
By \eqref{SS}, we have $\De|X|^2=2n-|X^N|^2$(see \cite{CM1} or
\cite{DX2}), then
$$\De\r=-\f{\r}4\De|X|^2+\f{\r}{16}|\n|X|^2|^2=-\f{\r}4(2n-|X^N|^2)+\f{\r}4|X^T|^2=-\f
n2\r+\f\r4|X|^2.$$ From \eqref{2.11}, we get(see also \cite{E})
\begin{equation}\aligned\label{2.12}
\int_M|\n f|^2=&\int_M|\n(|B|^{n/2}\e)|^2\r-\f18\int_M|B|^n\e^2|X^N|^2\r\\
&+\f n4\int_M|B|^n\e^2\r-\f1{16}\int_M|B|^n\e^2|X^T|^2\r.
\endaligned
\end{equation}
Combining \eqref{SS}, Sobolev inequality (\ref{2.8}) and
\eqref{2.12}, we have
\begin{equation}\aligned\label{2.13}
&\k^{-1}\left(\int_M|f|^{\f {2n}{n-2}}\right)^{\f{n-2}n}\le\int_M|\n f|^2+\f18\int_M|B|^n\e^2|X^N|^2\r\\
\le&\int_M|\n(|B|^{n/2}\e)|^2\r+\f n4\int_M|B|^n\e^2\r\\
=&\int_M(\f{n^2}4|\n|B||^2|B|^{n-2}\e^2+n|B|^{n-1}\e\n|B|\cdot\n\e+|B|^n|\n\e|^2)\r+\f n4\int_M|B|^n\e^2\r.\\
\endaligned
\end{equation}
Combining Cauchy inequality, \eqref{2.10} and \eqref{2.13}, for
any $\de>0$, we have
\begin{equation}\aligned\label{2.14}
&\k^{-1}\left(\int_M|f|^{\f {2n}{n-2}}\right)^{\f{n-2}n}\\
\le&(1+\de)\f{n^2}4\int_M|\n|B||^2|B|^{n-2}\e^2\r+(1+\f1{\de})\int_M|B|^n|\n\e|^2\r+\f n4\int_M|B|^n\e^2\r\\
\le&\f{(1+\de)n^2}{8(n-1-\ep)}\left(3\int_M|B|^{n+2}\e^2\r-\int_M|B|^n\e^2\r
+\frac{2}{\ep}\int_M|B|^n|\n\e|^2\r\right)\\
&+(1+\f1{\de})\int_M|B|^n|\n\e|^2\r+\f n4\int_M|B|^n\e^2\r.\\
\endaligned
\end{equation}
Let $\de=2\f{n-1+\ep}n-1>0$ in \eqref{2.14} for some $\ep>0$ to be defined later, then
\begin{equation}\aligned\label{2.15}
&\k^{-1}\left(\int_M|f|^{\f {2n}{n-2}}\right)^{\f{n-2}n}\\
\le&\f {3n}4\cdot\f{n-1+\ep}{n-1-\ep}\int_M|B|^{n+2}\e^2\r+(\f n{2\ep}\cdot\f{n-1+\ep}{n-1-\ep}+1+
\f1{\de})\int_M|B|^n|\n\e|^2\r\\
\le&\f {3n}4\cdot\f{n-1+\ep}{n-1-\ep}\left(\int_M|B|^{2\cdot\f n2}\right)^{\f2 n}\left(\int_M(|B|^{n}\e^2\r)^{\f n{n-2}}\right)^{\f {n-2}n}\\
&+(\f n{2\ep}\cdot\f{n-1+\ep}{n-1-\ep}+1+\f1{\de})\int_M|B|^n|\n\e|^2\r.\\
\endaligned
\end{equation}
Since $(\int_M|B|^n\mathrm{d}\mu)^{1/n}<\sqrt{\frac{4}{3n\k}}$,
then from \eqref{2.15} there is $0<\ep_0<1$ such that
\begin{equation}\aligned\nonumber
\k^{-1}\left(\int_M|f|^{\f {2n}{n-2}}\right)^{\f{n-2}n}\le&\f {3n}4\cdot\f{n-1+\ep}{n-1-\ep}\cdot\frac{4(1-\ep_0)}{3n\k}\left(\int_M|f|^{\f {2n}{n-2}}\right)^{\f {n-2}n}\\
&+C(\ep)\int_M|B|^n|\n\e|^2\r,\\
\endaligned
\end{equation}
namely,
\begin{equation}\aligned
\f{(n-1+\ep)\ep_0-2\ep}{(n-1-\ep)\k}\left(\int_M|f|^{\f {2n}{n-2}}\right)^{\f{n-2}n}\le C(\ep)\int_M|B|^n|\n\e|^2\r.\\
\endaligned
\end{equation}
Let $\ep=\f {\ep_0}2$, since $\int_M|B|^n\mathrm{d}\mu$ is
bounded, then we choose $\e$ as Proposition \ref{1gap} which
implies $|B|\equiv0$.
\end{proof}

\begin{rmk}
For codimension $m=1$ case, we can use (\ref{LB1}) instead of (\ref{LB2}) and the pinching constant would be better.

\end{rmk}

\bigskip

\Section{Rigidity results for Codimension 1}{Rigidity results for
Codimension 1}

\medskip

Now, we deal with the codimension $m=1$ case. We choose a local
orthonormal frame field $\{e_1, \cdots, e_n, \nu\}$ in $\R^{n+1}$
along the hypersurce  $M$ with $\{e_i\}_{i=1}^{n}$  tangent to
$M$ and $\nu$  normal to $M$. Set the second fundamental form
$B_{e_ie_j}=h_{ij}\nu$.

Define the covariant derivatives $Dh$ of $h$ (with component
$h_{ijk}$) by
$$\sum_kh_{ijk}\omega_k=dh_{ij}-\sum_kh_{ik}\omega_{jk}-\sum_kh_{kj}\omega_{ik},$$
and similarly we can define the covariant derivatives $h_{ijkl}$ and
$h_{ijkls}$. We have the Ricci identity
\begin{equation}\aligned\label{Ric id}
h_{ijkl}-h_{ijlk}&=\sum_{s=1}^nh_{is}R_{sjkl}+\sum_{s=1}^nh_{sj}R_{sikl},\\
h_{ijkls}-h_{ijksl}&=\sum_{r=1}^nh_{rjk}R_{rils}+\sum_{r=1}^nh_{irk}R_{rjls}+\sum_{r=1}^nh_{ijr}R_{rkls}.
\endaligned
\end{equation}

We need the following higher order Simons' type formula for
further estimates.

\begin{thm}
Let $M^n$ be an immersed self-shrinker in $\R^{n+1}.$ Then, we
have
\begin{equation}\aligned\label{2 diff}
\sum_{i,j,k,l}h_{ijkl}^2-\frac{1}{2}\mathcal{L}|\nabla
B|^2=(|B|^2-1)|\nabla
B|^2+3\Xi+\frac{3}{2}|\nabla|B|^2|^2,
\endaligned
\end{equation}
where $\Xi=\sum_{i,j,k,l,m}h_{ijk}h_{ijl}h_{km}h_{ml}-2\sum_{i,j,k,l,m}h_{ijk}h_{klm}h_{im}h_{jl}.$
\end{thm}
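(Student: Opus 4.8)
The plan is to reduce \eqref{2 diff} to a pointwise third order Simons identity for the fully symmetric tensor $h_{ijk}$ and then to contract. Applying the product rule for $\mc L=\De-\f12\langle X,\n(\cdot)\rangle$ to $|\n B|^2=\sum_{i,j,k}h_{ijk}^2$ gives
\begin{equation*}
\f12\mc L|\n B|^2=\sum_{i,j,k}h_{ijk}\,\mc L h_{ijk}+\sum_{i,j,k,l}h_{ijkl}^2 ,
\end{equation*}
since $\De(\sum h_{ijk}^2)=2\sum h_{ijk}\De h_{ijk}+2\sum h_{ijkl}^2$ and $\langle X,\n(\sum h_{ijk}^2)\rangle=2\sum h_{ijk}\langle X,\n h_{ijk}\rangle$. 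Hence \eqref{2 diff} is equivalent to the purely pointwise identity
\begin{equation*}
-\sum_{i,j,k}h_{ijk}\,\mc L h_{ijk}=(|B|^2-1)|\n B|^2+3\Xi+\f32|\n|B|^2|^2 ,
\end{equation*}
and everything reduces to computing $\mc L h_{ijk}$ and pairing it with $-h_{ijk}$.

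As an ingredient I would first record the second order identity. Specializing \eqref{D2H} to codimension one (so $B_{ij}=h_{ij}\nu$, $H=\mc H\,\nu$ with $\mc H=\sum_l h_{ll}$, and $\langle X,\nu\rangle=-2\mc H$ by \eqref{SS}) yields the Hessian of the scalar mean curvature,
\begin{equation*}
\n_i\n_j\mc H=\f12 h_{ij}-\mc H\sum_p h_{ip}h_{jp}+\f12\langle X,e_m\rangle h_{ijm}.
\end{equation*}
Combining this with the classical Simons commutation $\De h_{ij}=\n_i\n_j\mc H+\mc H\sum_p h_{ip}h_{pj}-|B|^2h_{ij}$, which follows from the first line of \eqref{Ric id} together with the Gauss equation $R_{ijkl}=h_{ik}h_{jl}-h_{il}h_{jk}$, the $\mc H$-quartic terms cancel and one obtains the Colding--Minicozzi identity $\mc L h_{ij}=(\f12-|B|^2)h_{ij}$, which is consistent with \eqref{LB1}.

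To pass to third order I would differentiate, writing $\mc L h_{ijk}=\n_k(\mc L h_{ij})+[\mc L,\n_k]h_{ij}$. The principal term is $\n_k[(\f12-|B|^2)h_{ij}]=(\f12-|B|^2)h_{ijk}-(\n_k|B|^2)h_{ij}$. For the commutator, the drift part $-\f12[\langle X,\n\rangle,\n_k]h_{ij}$ is handled by $\n_k\langle X,e_m\rangle=\de_{km}+\langle X,\nu\rangle h_{km}=\de_{km}-2\mc H h_{km}$; this produces a clean $+\f12 h_{ijk}$, which upgrades the coefficient $\f12-|B|^2$ to $1-|B|^2$, together with a term $-\mc H h_{km}h_{ijm}$ and a residual piece $-\f12\langle X,e_m\rangle[\n_m,\n_k]h_{ij}$. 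The curvature part $[\De,\n_k]h_{ij}$ is then expanded through the Ricci identities \eqref{Ric id} and reduced to a polynomial in $h$ and $\n h$ by the Gauss equation. The essential simplification throughout is the self shrinker relation $\langle X,e_m\rangle h_{ms}=2\n_s\mc H$ from \eqref{DH}, which absorbs the position vector in every residual term (the $e_m$ index always lands on an $h$ after applying Gauss) and converts it into a genuine polynomial in the second fundamental form and its derivatives.

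Finally I would pair the resulting expression with $-h_{ijk}$ and sum over all indices. The term $-(\n_k|B|^2)h_{ij}$ contributes $\f12|\n|B|^2|^2$, the coefficient of $|\n B|^2$ assembles to $|B|^2-1$ as above, and the remaining monomials are all four-factor terms with two derivatives, of the two shapes $h_{ijk}h_{ijl}h_{km}h_{ml}$ and $h_{ijk}h_{klm}h_{im}h_{jl}$ together with $\n|B|^2$-type contractions. The main obstacle is exactly this last bookkeeping: the curvature commutator $[\De,\n_k]h_{ij}$ and the converted position-vector terms $-\mc H h_{km}h_{ijm}$ and $-\f12\langle X,e_m\rangle[\n_m,\n_k]h_{ij}$ generate a large number of quartic pieces, and one must exploit the full Codazzi symmetry of $h_{ijk}$ and the antisymmetries of the Gauss tensor to check that they collect to precisely $3\Xi$ while the gradient terms close up to $\f32|\n|B|^2|^2$. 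Pinning down the numerical coefficients $3$ and $\f32$ without dropping factors of two is the delicate point, whereas the structural steps above are routine.
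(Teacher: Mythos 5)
Your reduction is sound and, up to reorganization, is the paper's own argument: the paper computes $\Delta h_{ijk}$ directly by iterating the Ricci identities \eqref{Ric id} down to the third derivative $H_{jik}$ of the mean curvature and substituting the differentiated self-shrinker relation \eqref{3.4}, whereas you differentiate the second-order identity $\mathcal{L}h_{ij}=(\tfrac12-|B|^2)h_{ij}$ and compute the commutator $[\mathcal{L},\nabla_k]$; these are the same calculation. Your intermediate claims check out: the splitting $\tfrac12\mathcal{L}|\nabla B|^2=\sum h_{ijk}\mathcal{L}h_{ijk}+\sum h_{ijkl}^2$, the identity $\mathcal{L}h_{ij}=(\tfrac12-|B|^2)h_{ij}$, the relation $\nabla_k\langle X,e_m\rangle=\delta_{km}-2Hh_{km}$ producing the extra $\tfrac12 h_{ijk}$ and the term $-Hh_{km}h_{ijm}$, and the use of $\langle X,e_m\rangle h_{ms}=2H_s$ from \eqref{DH} to absorb the position vector (this is exactly the paper's step \eqref{3.7}).

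The gap is that you stop precisely where the content of the theorem lies. The statement to be proved is an exact identity with the specific coefficients $3$ in front of $\Xi$ and $\tfrac32$ in front of $|\nabla|B|^2|^2$, and your write-up explicitly defers the verification that the quartic terms collect to $3\Xi$ and the gradient terms to $\tfrac32|\nabla|B|^2|^2$, calling it delicate bookkeeping you have not performed. For an identity of this kind the bookkeeping \emph{is} the proof: one must expand the curvature commutators via the Gauss equation $R_{ijkl}=h_{ik}h_{jl}-h_{il}h_{jk}$, including the differentiated terms $h_{ir}(R_{rjkl})_l$ etc., check that all monomials containing $H$ explicitly cancel (e.g. the $-3Hh_{ijk}h_{ijl}h_{kl}$ arising from your drift commutator against the $3h_{rij}h_{rk}h_{ll}$ coming from the Gauss expansion), and track which contractions produce $\sum h_{ijk}h_{ijl}h_{km}h_{ml}$ versus $\sum h_{ijk}h_{klm}h_{im}h_{jl}$ versus $|\nabla|B|^2|^2$. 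This is carried out in the paper in \eqref{3.6}--\eqref{3.8}, and until you reproduce that contraction (or an equivalent one) your argument establishes only that the left-hand side of \eqref{2 diff} equals \emph{some} quartic expression in $h$ and $\nabla h$ plus $(|B|^2-1)|\nabla B|^2$, not the stated one.
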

\begin{rmk}
At each point $p\in M$, $h_{ij}$ can be diagonalized $h_{ij}=\la_i\de_{ij}$. Then,
$$\Xi=\sum_{i,j,k}h_{ijk}^2(\lambda^2_k-2\lambda_i\lambda_j).$$
\end{rmk}
\begin{proof}
We choose a local orthonormal frame field $\{e_i\}_{i=1}^{n}$ and
normal at a considered point $p\in M$, i.e.,
$\left.\n_{e_i}e_j\right|_p=0$ for any $1\le i,j\le n$. By Ricci
identity \eqref{Ric id}, we obtain
\begin{equation}\aligned\label{3.2}
\Delta
h_{ijk}=&h_{ijkll}=(h_{ijlk}+h_{ir}R_{rjkl}+h_{rj}R_{rikl})_l\\
=&h_{ijllk}+h_{rjl}R_{rikl}+h_{irl}R_{rjkl}+h_{ijr}R_{rlkl}+(h_{ir}R_{rjkl}+h_{rj}R_{rikl})_l\\
=&(h_{ljli}+h_{lr}R_{rjil}+h_{rj}R_{rlil})_k+h_{rjl}R_{rikl}+h_{irl}R_{rjkl}+h_{ijr}R_{rlkl}\\
&+h_{irl}R_{rjkl}+h_{rjl}R_{rikl}+h_{ir}(R_{rjkl})_l+h_{rj}(R_{rikl})_l\\
=&H_{jik}+h_{rkl}R_{rjil}+h_{rjk}R_{rlil}+2h_{rjl}R_{rikl}+2h_{ril}R_{rjkl}+h_{rij}R_{rlkl}\\
&+h_{ir}(R_{rjkl})_l+h_{rj}(R_{rikl})_l+h_{lr}(R_{rjil})_k+h_{rj}(R_{rlil})_k.
\endaligned
\end{equation}
By (\ref{D2H}) (when the codimension is 1),
\begin{equation}\aligned\label{3.3}
2H_{ji}=h_{jli}\langle X,e_l\rangle+h_{ij}-2Hh_{il}h_{jl}.
\endaligned
\end{equation}
Since $-\frac{1}{2}\langle X,\nu\rangle=H=\sum_ih_{ii}$, then
\begin{equation}\aligned\label{3.4}
2H_{jik}=&h_{jlik}\langle X,e_l\rangle+h_{jli}\langle
e_k,e_l\rangle+h_{jli}\langle X,\overline{\nabla}_{e_k}e_l
\rangle+h_{ijk}\\
&-2H_kh_{il}h_{jl}-2H(h_{ikl}h_{jl}+h_{il}h_{jkl})\\
=&h_{jlik}\langle X,e_l\rangle+2h_{ijk}-2H_kh_{il}h_{jl}
-2H(h_{il}h_{jkl}+h_{jl}h_{ikl}+h_{kl}h_{jli}).
\endaligned
\end{equation}
Combining (\ref{3.2}) and (\ref{3.4}), we obtain
\begin{equation}\aligned\label{3.5}
\Delta h_{ijk}=&\frac{1}{2}h_{jlik}\langle
X,e_l\rangle+h_{ijk}-H(h_{il}h_{jlk}+h_{jl}h_{ikl}+h_{kl}h_{jli})-h_{il}h_{jl}H_{k}\\
&+h_{rkl}R_{rjil}+h_{rjk}R_{rlil}+2h_{rjl}R_{rikl}+2h_{ril}R_{rjkl}+h_{rij}R_{rlkl}\\
&+h_{ir}(R_{rjkl})_l+h_{rj}(R_{rikl})_l+h_{lr}(R_{rjil})_k+h_{rj}(R_{rlil})_k.
\endaligned
\end{equation}
A straightforward computation gives
\begin{equation}\aligned\label{3.6}
& h_{ijk}(h_{rkl}R_{rjil}+h_{rjk}R_{rlil}+2h_{rjl}R_{rikl}+2h_{ril}R_{rjkl}+h_{rij}R_{rlkl}\\
&+h_{ir}(R_{rjkl})_l+h_{rj}(R_{rikl})_l+h_{lr}(R_{rjil})_k+h_{rj}(R_{rlil})_k)\\
=& h_{ijk}(6h_{rkl}h_{ri}h_{jl}-6h_{rkl}h_{rl}h_{ij}+3h_{rij}h_{rk}h_{ll}-3h_{rij}h_{rl}h_{kl}\\
&+3h_{ir}h_{rk}h_{jll}-2h_{ir}h_{jk}h_{rll}-h_{ijk}h_{rl}^2).
\endaligned
\end{equation}
From Ricci identity \eqref{Ric id} and (\ref{DH}), we have
\begin{equation}\aligned\label{3.7}
\frac{h_{ijk}}{2}(h_{ijlk}-h_{ijkl})\langle X,e_l\rangle=&\frac{h_{ijk}}{2}(h_{ir}R_{rjlk}+h_{rj}R_{rilk})\langle X,e_l\rangle\\
=&2h_{ijk}h_{ir}h_{jk}H_r-2h_{ijk}h_{ir}h_{rk}H_j.
\endaligned
\end{equation}
By \eqref{3.5}-\eqref{3.7}, we have
\begin{equation}\aligned\label{3.8}
&\frac{1}{2}(\Delta-\frac{1}{2}\langle
X,\nabla\cdot\rangle)h_{ijk}^2=h_{ijk} (\Delta
h_{ijk}-\frac{1}{2}h_{ijkl}\langle X,e_l\rangle)+h_{ijkl}^2\\
=&\frac{h_{ijk}}{2}(h_{ijlk}-h_{ijkl})\langle
X,e_l\rangle+h_{ijk}^2+h_{ijkl}^2-Hh_{ijk}(h_{il}h_{jlk}+h_{jl}h_{ikl}+h_{kl}h_{jli})\\
&-h_{ijk}h_{il}h_{jl}H_k+h_{ijk}(h_{rkl}R_{rjil}+h_{rjk}R_{rlil}+2h_{rjl}R_{rikl}+2h_{ril}R_{rjkl}\\
&+h_{rij}R_{rlkl}
+h_{ir}(R_{rjkl})_l+h_{rj}(R_{rikl})_l+h_{lr}(R_{rjil})_k+h_{rj}(R_{rlil})_k)\\
=&2h_{ijk}h_{ir}h_{jk}H_r-2h_{ijk}h_{ir}h_{rk}H_j+h_{ijk}^2+h_{ijkl}^2-3Hh_{ijk}h_{ijl}h_{kl}-h_{il}h_{jl}H_kh_{ijk}\\
&+h_{ijk}(6h_{rkl}h_{ri}h_{jl}-6h_{rkl}h_{rl}h_{ij}+3h_{rij}h_{rk}h_{ll}-3h_{rij}h_{rl}h_{kl}+3h_{ir}h_{rk}h_{jll}\\
&-2h_{ir}h_{jk}h_{rll}-h_{ijk}h_{rl}^2)\\
=&(1-|B|^2)h_{ijk}^2+h_{ijkl}^2+h_{ijk}(6h_{iu}h_{jv}h_{uvk}-3h_{iju}h_{uv}h_{kv})-\frac{3}{2}|\nabla|B|^2|^2.\\
\endaligned
\end{equation}
\end{proof}

The quantity $\Xi$ can be estimated as follows.

For $k\neq j$, by Cauchy inequality, we have
$$\la_k^2-2\la_k\la_j\le\la_k^2+\f{\sqrt{17}-1}2\la_k^2+\f{\sqrt{17}+1}2\la_j^2\le\f{\sqrt{17}+1}2|B|^2.$$
Then
\begin{equation}\aligned\label{pt}
3\Xi\leq&\sum_{i,j,k\ distinct}h_{ijk}^2\left(\lambda_i^2+\lambda_j^2+\lambda_k^2
-2(\lambda_i\lambda_j+\lambda_j\lambda_k+\lambda_i\lambda_k)\right)
+3\sum_{j,i\neq j}h_{iij}^2(\lambda_j^2-4\lambda_i\lambda_j)\\
\leq&\sum_{i,j,k\ distinct}h_{ijk}^2\left(2(\lambda_i^2+\lambda_j^2+\lambda_k^2)
-(\lambda_i+\lambda_j+\lambda_k)^2\right)+3\sum_{j,i\neq j}h_{iij}^2\f{\sqrt{17}+1}2|B|^2\\
\leq&2|B|^2\sum_{i,j,k\
distinct}h_{ijk}^2+\f{3(\sqrt{17}+1)}2|B|^2\sum_{j,i\neq
j}h_{iij}^2\le \frac{\sqrt{17}+1}{2}|B|^2|\nabla B|^2.
\endaligned
\end{equation}

\begin{thm}
Let  $M^2$ be a complete proper self-shrinker in $\R^3.$ If the
squared norm  $|B|^2$ of the length of second fundamental form is
a constant, then $|B|^2\equiv0$ or $\frac{1}{2}$.
\end{thm}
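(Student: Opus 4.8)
The plan is to squeeze the two Simons-type identities already in hand. \emph{Step 1.} Since $|B|^2\equiv t$ is constant, both $\De|B|^2$ and $\langle X,\n|B|^2\rangle$ vanish, so $\mc{L}|B|^2=0$; the codimension-one identity (\ref{LB1}) then reads $0=2|\n B|^2+2t(\f12-t)$, i.e. $|\n B|^2=t(t-\f12)$. Because $|\n B|^2\ge 0$ this already forces $t=0$ or $t\ge\f12$. If $t=0$ then $B\equiv 0$ and $M$ is a plane, so everything reduces to proving that the range $t>\f12$ is impossible.

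\emph{Step 2.} The crucial observation is that $|\n B|^2=t(t-\f12)$ is now also a (positive) constant, hence $\mc{L}|\n B|^2=0$ and $\n|B|^2=0$. Feeding these into the higher order Simons identity (\ref{2 diff}) makes the Laplacian term on the left and the $|\n|B|^2|^2$ term on the right vanish, leaving the pointwise identity $\sum_{i,j,k,l}h_{ijkl}^2=(t-1)|\n B|^2+3\Xi$ on all of $M$, whose left-hand side is nonnegative. I would also record two clean auxiliary facts valid under $|B|^2\equiv t$: from (\ref{LH}) one gets $\mc{L}H=(\f12-t)H$, so $H$ is an $\mc{L}$-eigenfunction, and $(\la_1-\la_2)^2\ge 0$ gives the pinching $H^2\le 2t$; integrating $\mc{L}H=(\f12-t)H$ against $H\r$ (with cut-off) yields $\int_M|\n H|^2\r=(t-\f12)\int_M H^2\r$, which pins down the trace part of $\n B$.

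\emph{Step 3 (the finish, and the obstacle).} To kill $t>\f12$ I would integrate $\sum h_{ijkl}^2=(t-1)|\n B|^2+3\Xi$ against $\r$ with the cut-off $\e=\phi(|X|/r)$ used in Proposition \ref{1gap}, letting $r\to\infty$ and using the Euclidean volume growth of \cite{DX2} to drop the boundary contributions. A bound of the form $3\int_M\Xi\,\r\le C\int_M\r$ with $C<(1-t)|\n B|^2$ would give $0\le\int_M(\sum h_{ijkl}^2)\r\le\big[(t-1)|\n B|^2+C\big]\int_M\r<0$, a contradiction that leaves only $t=\f12$. The hard part is exactly producing such a sharp control of $\int_M\Xi\,\r$: the pointwise estimate (\ref{pt}), $3\Xi\le\f{\sqrt{17}+1}2|B|^2|\n B|^2$, is too crude, since for $t\ge\f12$ its coefficient $\f{\sqrt{17}+3}2 t-1$ is positive. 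I expect the decisive input to be a genuinely two-dimensional computation: diagonalize $B=\mathrm{diag}(\la_1,\la_2)$, use the constancy relations $\la_1 h_{11k}+\la_2 h_{22k}=0$ (from $\n_k|B|^2=0$) together with $\n_m|\n B|^2=0$ and the self-shrinker relation (\ref{DH}), and integrate $\Xi$ by parts exploiting $\n\r=-\f12 X^T\r$, so that the constancy of $|B|^2$ and $|\n B|^2$ feeds back the extra cancellation. As a consistency check, in the totally umbilic case $\la_1=\la_2$ one has $\n B=0$, so $t(t-\f12)=0$ and $M$ is the sphere $S^2(2)$ with $t=\f12$, confirming the sign of the estimate.
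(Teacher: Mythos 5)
Your Steps 1 and 2 are correct, but they are only the easy reductions; the theorem is not proved, because Step 3 --- which you yourself flag as ``the obstacle'' --- is precisely the missing content, and the route you propose for it does not close. The global bound $3\int_M\Xi\,\rho\le C\int_M\rho$ with $C<(1-t)|\n B|^2$ that you would need goes the wrong way: under $S=|B|^2\equiv t$ the paper's own integration by parts (Lemma \ref{intpart}) gives $\int_M\Xi\,\rho=\frac12\int_Mf\rho\ge0$, and combining the integrated form of \eqref{2 diff} with the lower bound \eqref{4.12} only yields $0\le (t-1)|\n B|^2\int_M\rho+\frac34\int_Mf\rho$, which is not a contradiction for any $t>\frac12$. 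So integrating $\Xi$ against $\rho$ over all of $M$, however cleverly, does not by itself rule out $t>\frac12$; some input localized where $H$ vanishes is needed.

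The paper's actual argument rests on three ideas absent from your proposal. First, if $H$ does not change sign, Huisken's classification of mean convex self-shrinkers (combined with the Euclidean volume growth of \cite{DX2} and the fact that Abresch--Langer curves do not have constant curvature) already forces $M$ to be $S^k(\sqrt{2k})\times\R^{2-k}$, hence $t=0$ or $\frac12$. Second, if $H$ changes sign and $t>\frac12$, one computes \emph{pointwise on the nodal set} $E=\{H=0\}$: there $\la_1+\la_2=0$, constancy of $S$ gives $h_{111}=h_{122}$ and $h_{112}=h_{222}$, the self-shrinker equation gives $h_{111}=\frac14x_1\la_1$, $h_{222}=\frac14x_2\la_2$, and all the $h_{ijkl}$ can be written out explicitly; substituting into \eqref{2 diff} produces the algebraic identity \eqref{c12}, which forces $\la_1^2=\frac34$, i.e.\ $t=\frac32$ and $|X|^2=8$ everywhere on $E$. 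Third, the contradiction is spectral: since $t=\frac32$ one has $\mc{L}H+H=0$, so $H$ is a positive first Dirichlet eigenfunction with eigenvalue $\ge1$ on a nodal domain containing $D_{2\sqrt2}=\{|X|<2\sqrt2\}$, while $g=4-|X|^2$ satisfies $\mc{L}g+g=0$ and is positive on the strictly smaller $D_2$, giving first eigenvalue $\le1$ there --- contradicting the strict monotonicity of the first eigenvalue under domain inclusion. Your consistency check on the umbilic case is fine, but none of these three steps is recoverable from the integral estimate you envisage.
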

\begin{proof}
If mean curvature $H$ is non-positive, then by Huisken's
classification theorem (see \cite{CM1}\cite{H1}\cite{H2}) and
Euclidean volume growth \cite{DX2}, we know Abresch-Langer curve \cite{AL} has not
constant curvature, then $M$ is isometric to $S^k\times\R^{2-k}$ for
$0\le k\le 2$. Hence $|B|^2\equiv\frac{1}{2}$ or 0.

Now, we suppose that the mean curvature $H$ changes sign and
$|B|^2>\f12$. For any fixed point $p$ with mean curvature $H|_p=0$,
we suppose that $\{e_1,e_2\}$ is normal at the point $p$ and
$h_{ij}=\la_i\de_{ij}$ for $i=1,2$, then
\begin{equation}\label{c1}
\la_1+\la_2=0
\end{equation}
at the point $p$.  In this proof, we always carry out derivatives at
$p.$

By
$0=\f12(|B|^2)_k=\sum_{i,j}h_{ij}h_{ijk}=\la_1h_{11k}+\la_2h_{22k}$
and \eqref{c1}, we have
\begin{equation}\label{c2}
h_{111}=h_{122},\ \ h_{112}=h_{222}.
\end{equation}
Then
\begin{equation}\label{c3}
|\n B|^2=\sum_{i,j,k}h_{ijk}^2=4h_{111}^2+4h_{222}^2.
\end{equation}
Since
$$h_{11i}+h_{22i}=-\f{(\left< X,\nu\right>)_i}2=-\f{\left< X,\n_{e_i}\nu\right>}2
=\f{\left< X,e_j\right>}2h_{ij}=\f{\left<
X,e_i\right>}{2}\la_{i},$$ and denote $\left< X,e_i\right>$ by
$x_i$, then by \eqref{c2}, we have
\begin{equation}\label{c4}
h_{111}=\f14x_1\la_1,\ \ h_{222}=\f14x_2\la_2.
\end{equation}
Combining \eqref{c1}, \eqref{c3}, \eqref{c4} and $H=-\f{\left<
X,\nu\right>}2=0$, we get
\begin{equation}\label{c5}
|\n
B|^2=\f14x_1^2\la_1^2+\f14x_2^2\la_2^2=\f14|X|^2\la_1^2=\f18|X|^2|B|^2.
\end{equation}
By (\ref{LB1}), $|\n B|^2=|B|^2(|B|^2-\f12)$, we obtain
\begin{equation}\label{c6}
|X|^2=8(|B|^2-\f12)=16\la_1^2-4.
\end{equation}
From Ricci identity \eqref{Ric id}, we can obtain
\begin{equation}\label{c7}
h_{ijkl}-h_{ijlk}=(\la_i-\la_j)\la_i\la_j(\de_{ik}\de_{jl}-\de_{il}\de_{jk}).
\end{equation}
Especially, $h_{iikl}=h_{iilk}$. Moreover,
\begin{equation}\label{c8}
0=\f12(|B|^2)_{kl}=\sum_{i,j}(h_{ijk}h_{ijl}+h_{ij}h_{ijkl}).
\end{equation}
Combining \eqref{3.3} and \eqref{c8}, we have
\begin{equation}\aligned\label{c8a}
h_{11kl}&=\f14(x_1h_{kl1}+x_2h_{kl2}+h_{kl})-\f1{2\la_1}\sum_{i,j}h_{ijk}h_{ijl},\\
h_{22kl}&=\f14(x_1h_{kl1}+x_2h_{kl2}+h_{kl})-\f1{2\la_2}\sum_{i,j}h_{ijk}h_{ijl}.
\endaligned
\end{equation}
Combining \eqref{c1}-\eqref{c7} and \eqref{c8a}, we have
\begin{equation}\aligned\label{c9}
&h_{1111}=\f{\la_1}4-\f{\la_1}8x_2^2;\qquad h_{1122}=-\f{\la_1}4-\f{\la_1}8x_2^2;\\
&h_{2211}=-\f{\la_2}4-\f{\la_2}8x_1^2;\qquad h_{2222}=\f{\la_2}4-\f{\la_2}8x_1^2;\\
&h_{1112}=h_{1121}=\f18x_1x_2\la_1;\qquad
h_{2212}=h_{2221}=\f18x_1x_2\la_2.
\endaligned
\end{equation}
By \eqref{c6} and \eqref{c9}, we get
\begin{equation}\aligned\label{c10}
|\n^2
B|^2=\sum_{i,j,k,l}h_{ijkl}^2=\f{\la_1^2}{16}(|X|^4+2|X|^2+8)=\la_1^2(16\la_1^4-6\la_1^2+1),
\endaligned
\end{equation}
and
\begin{equation}\aligned\label{c11}
\sum_{i,j,k}h_{ijk}^2(\la_k^2-2\la_i\la_j)=|\n B|^2\la_1^2.
\endaligned
\end{equation}
By formula \eqref{2 diff},
we have
\begin{equation}\aligned\label{c12}
\la_1^2(16\la_1^4-6\la_1^2+1)=2\la_1^2(2\la_1^2-\f12)(2\la_1^2-1)+6\la_1^2(2\la_1^2-\f12)\la_1^2,
\endaligned
\end{equation}
which implies
\begin{equation}\aligned\label{c13}
\la_1^2=\f34.
\endaligned
\end{equation}
By \eqref{c6}, we get
\begin{equation}\aligned\label{c14}
|X|^2=8|B|^2-4=8.
\endaligned
\end{equation}
By the formula (\ref{D2H}), $\mathcal{L}H+(|B|^2-\f12)H=H$(see also \cite{CM1}),
we have
\begin{equation}\aligned\label{c.15}
\mathcal{L}H+H=0.
\endaligned
\end{equation}
Let the set $E=\{p\in M;\ H(p)=0\}.$ Since $H$ changes sign and \eqref{c14}, $E\neq\emptyset$, $\p
E=\emptyset$, $E\subset\p D_{2\sqrt{2}}$ and $H(p)\neq0$ for any
$p\in D_{2\sqrt{2}}$. Then there is a constant $c_1\ge1$ and an
eigenfunction $u_1>0$ in some connect component $\Om_1$ of
$D_{2\sqrt{2}}$ such that
\begin{eqnarray*}
    \left\{\begin{array}{ccc}
     \mathcal{L}u_1+c_1u_1=0     & \quad\ \ \ \mathrm{in}\ \Om_1 \\ [3mm]
     u_1|_{\p \Om_1}=0.  & \quad\quad\
     \end{array}\right.
\end{eqnarray*}


Let $g=4-|X|^2$, then
$\mathcal{L}g=-\mathcal{L}|X|^2=|X|^2-4=-g$. There is a constant
$c_2\in(0,1]$ and an eigenfunction $u_2>0$ in some connect
component $\Om_2$ of $D_2$ with $\Om_2\subset\Om_1$ in $D_2$ such that
\begin{eqnarray*}
    \left\{\begin{array}{ccc}
     \mathcal{L}u_2+c_2u_2=0     & \quad\ \ \ \mathrm{in}\ \Om_2 \\ [3mm]
     u_2|_{\p \Om_2}=0.  & \quad\quad\
     \end{array}\right.
\end{eqnarray*}
By the Rayleigh quotient characterization of the first eigenvalue,
we know the first eigenvalue is decreasing in domains. The above
argument contradicts with this fact. Therefore, the case $H$
changing sign and $|B|^2>\f{1}{2}$ is impossible.

\end{proof}

We define the traceless part of second fundamental form by
$\dot{B}=B-\f1n gH$, where $g$ is the metric of $M$. Then we have
$$|\dot{B}|^2=|B|^2-\f{|H|^2}n,\qquad \mathrm{and}\qquad |\n\dot{B}|^2=|\n B|^2-\f{|\n H|^2}n.$$
In face, by \cite{Hu}, the tensor $\n B$ could be decomposed into
orthogonal components $\n_iB_{jk}=E_{ijk}+F_{ijk}$ where
$$E_{ijk}=\f1{n+2}(g_{ij}\n_kH+g_{ik}\n_jH+g_{jk}\n_iH)\qquad\text{and}\qquad |E|^2=\f{3}{n+2}|\n H|^2.$$ Then
\begin{equation}\aligned
|\n \dot{B}|^2=&|\n(B-\f1n gH)|^2=|\n B|^2-\f2n\sum \langle B_{ijk},\de_{ij}H_k\rangle+\f1n|\n H|^2\\
=&|\n B|^2-\f2{3n}\sum \langle B_{ijk},(n+2)E_{ijk}\rangle+\f1n|\n H|^2\\
=&|\n B|^2-\f{2(n+2)}{3n}|E|^2+\f1n|\n H|^2\\
=&|\n B|^2-\f{2}{n}|\n H|^2+\f1n|\n H|^2=|\n B|^2-\f1n|\n H|^2.\\
\endaligned
\end{equation}

\begin{thm}
Let $M^2$ be a complete self-shrinker in
$\R^{3}$, if $|\dot{B}|$ is a constant on $M$, then $|B|^2\equiv0$ or $\f12$.
\end{thm}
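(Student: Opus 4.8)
The plan is to mirror the structure of the preceding constant-$|B|^2$ theorem, but to replace its delicate eigenvalue-comparison step by a soft growth argument, after first converting the single hypothesis ``$|\dot B|$ constant'' into rigid pointwise identities. First I would record the Simons identity for the traceless part. Combining the codimension-one identity \eqref{LB1} with \eqref{LH} (which in codimension one reads $\mathcal{L}|H|^2 = 2|H|^2(\tfrac12-|B|^2)+2|\nabla H|^2$, since $\sum_{ij}|\langle H,B_{ij}\rangle|^2=|H|^2|B|^2$) and the algebraic relations $|\dot B|^2=|B|^2-\tfrac12|H|^2$ and $|\nabla\dot B|^2=|\nabla B|^2-\tfrac12|\nabla H|^2$ yields $\mathcal{L}|\dot B|^2 = 2|\nabla\dot B|^2+2|\dot B|^2(\tfrac12-|B|^2)$. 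Since $|\dot B|^2\equiv c$ the left side vanishes, so $|\nabla\dot B|^2=c(|B|^2-\tfrac12)\ge 0$; in particular $|B|^2\ge\tfrac12$ everywhere, with equality exactly where $\nabla\dot B=0$.

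If $c=0$ then $\dot B\equiv0$, so $M$ is totally umbilic and hence, being a complete self-shrinker, is a plane through the origin or the round sphere $S^2(2)$, giving $|B|^2\equiv0$ or $\tfrac12$. Assume now $c>0$. Then $\lambda_1\ne\lambda_2$ at every point, so $B$ is diagonalized by a smooth local frame; differentiating $|\dot B|^2\equiv c$ in this frame gives $\mu(h_{11k}-h_{22k})=0$ with $\mu=\tfrac12(\lambda_1-\lambda_2)\ne0$, hence $h_{11k}=h_{22k}=\tfrac12 H_k$. Thus the whole tensor $\nabla B$ is controlled by $\nabla H$ (explicitly $h_{111}=h_{122}=\tfrac12H_1$, $h_{112}=h_{222}=\tfrac12H_2$), and a direct count gives the pointwise identity $|\nabla B|^2=|\nabla H|^2$. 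Feeding this back, $|\nabla\dot B|^2=\tfrac12|\nabla H|^2$, so $|\nabla H|^2=2c(|B|^2-\tfrac12)$. Writing $S=|B|^2$ and using $|H|^2=2(S-c)$ together with $\nabla S=H\,\nabla H$, I obtain the frame-independent relation
\[
|\nabla S|^2 = 4c\,(S-\tfrac12)(S-c).
\]

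The final step forces $S\equiv\tfrac12$. From the self-shrinker equation \eqref{SS}, $|H|=\tfrac12|X^N|\le\tfrac12|X|$, so $S=c+\tfrac12|H|^2\le c+\tfrac18|X|^2$; since the immersion is $1$-Lipschitz, $|X|$ and hence $S$ grow at most quadratically in arclength along unit-speed curves. Suppose $S(q)>\tfrac12$ at some point. Because $S\ge\max(\tfrac12,c)$, the unit-speed gradient-ascent curve of $S$ issuing from $q$ remains in $\{\nabla S\ne0\}$ and satisfies $\frac{dS}{ds}=2\sqrt{c(S-\tfrac12)(S-c)}$, which is bounded below by a positive constant, so $S\to\infty$ along the curve; once $S$ is large the right side exceeds $\tfrac{\sqrt c}{2}S$, forcing exponential growth of $S$ in arclength and contradicting the quadratic bound above. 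Hence no point has $S>\tfrac12$, i.e. $|B|^2\equiv\tfrac12$. (If $c>\tfrac12$ this is incompatible with $S\ge c>\tfrac12$ unless $S\equiv c$, i.e. $H\equiv0$ and $M$ a plane with $c=0$; so the case $c>\tfrac12$ cannot occur.)

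I expect the crux to be the second paragraph: extracting the rigid structure $h_{11k}=h_{22k}$ and the autonomous first-order law for $S$ from the single scalar condition, where two-dimensionality is essential both for diagonalizing $B$ and for the clean identity $|\nabla B|^2=|\nabla H|^2$. By contrast the closing argument is soft and uses only completeness and $|H|\le\tfrac12|X|$, so it needs no properness hypothesis. I would take care over the extension of the gradient-ascent curve (it stays in $\{\nabla S\ne0\}$ since $S$ is increasing and bounded away from the zeros $S=\tfrac12,c$ of $|\nabla S|^2$) and over the borderline value $c=\tfrac14$, where $S\equiv\tfrac12$ is realized by the cylinder $S^1(\sqrt2)\times\R$, in order to confirm that the dichotomy is exhaustive.
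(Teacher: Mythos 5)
Your argument is correct, and it takes a genuinely different route from the paper's. Both proofs start from the same identity $\mathcal{L}|\dot B|^2=2|\nabla\dot B|^2+2|\dot B|^2(\tfrac12-|B|^2)$ (the paper's \eqref{LdB}). From there the paper argues in one stroke: it asserts a Kato-type \emph{equality} $|\nabla|\dot B|^2|^2=2|\dot B|^2\,|\nabla\dot B|^2$ for the traceless second fundamental form of a surface (its \eqref{0trB}), so that constancy of $|\dot B|>0$ forces $\nabla\dot B\equiv0$ and \eqref{LdB} immediately gives $|B|^2\equiv\tfrac12$. You instead extract from $\nabla|\dot B|^2=0$ the relations $h_{11k}=h_{22k}=\tfrac12H_k$, deduce $|\nabla B|^2=|\nabla H|^2$ and hence $|\nabla\dot B|^2=\tfrac12|\nabla H|^2$, package everything into $|\nabla S|^2=4c(S-\tfrac12)(S-c)$, and close with a gradient-ascent/growth comparison using only completeness and $|H|\le\tfrac12|X|$. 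Your route is longer but, as far as I can check, the more reliable one: the step $8\lambda^2\sum_k(\dot h_{11k}^2+\dot h_{22k}^2)=4\lambda^2|\nabla\dot B|^2$ in \eqref{0trB} amounts to $\sum_k(\dot h_{11k}^2+\dot h_{22k}^2)=2\sum_k\dot h_{12k}^2$, which would follow from total symmetry of $\nabla\dot B$; but $\dot h_{ijk}-\dot h_{ikj}=\tfrac12(\delta_{ik}H_j-\delta_{ij}H_k)$ is nonzero in general, and your own computation shows that constancy of $|\dot B|$ yields $|\nabla\dot B|^2=\tfrac12|\nabla H|^2$, not $0$. So your second and third paragraphs supply precisely the work the paper's shortcut skips. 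Two small tightenings: in the sub-case $c>\tfrac12$ you need not classify minimal self-shrinkers, since $H\equiv0$ together with $|\nabla H|^2=2c(S-\tfrac12)$ already forces $S\equiv\tfrac12$, contradicting $S\equiv c>\tfrac12$; and it is worth stating explicitly that the gradient-ascent curve exists for all arclength because it is unit speed, $M$ is complete, and $|\nabla S|$ is bounded below along it, as this is the only place completeness is used.
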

\begin{proof}
By \eqref{LB1} and \eqref{LH}, we have
\begin{equation}\aligned\label{LdB}
\mathcal{L}|\dot{B}|^2=\mathcal{L}|B|^2-\f12\mathcal{L}|H|^2=2|\n \dot{B}|^2+2|\dot{B}|^2(\f12-|B|^2).
\endaligned
\end{equation}
Let $\dot{B}_{e_ie_j}=\dot{h}_{ij}\nu$, and the matrix $\dot{h}_{ij}$ can be diagonalized by $\dot{h}_{11}=\la$, $\dot{h}_{22}=-\la$, and $\dot{h}_{12}=0$. Then
$$\dot{h}_{11k}+\dot{h}_{22k}=0\ \ \mathrm{for} \ k=1,2,$$
and
\begin{equation}\aligned\label{0trB}
4|\dot{B}|^2|\n|\dot{B}||^2=&|\n|\dot{B}|^2|^2=4\sum_k(\sum_{i,j}\dot{h}_{ij}\dot{h}_{ijk})^2=8\la^2\sum_k(\dot{h}_{11k}^2+\dot{h}_{22k}^2)\\
=&4\la^2|\n\dot{B}|^2=2|\dot{B}|^2|\n\dot{B}|^2.
\endaligned
\end{equation}
If $|\dot{B}|=0$, then we  complete the proof. Now we suppose that
$|\dot{B}|$ is a positive constant. Then by \eqref{LdB} and
\eqref{0trB}, we get $|B|^2\equiv\f12$.

\end{proof}

Now, we give a result on finite integral properties about derivatives of second fundamental form, which is useful in the later integral estimates.
\begin{pro}
Let $M$ be a complete properly immersed self-shrinker in
$\R^{n+1}$, if $|B|$ is bounded on $M$, then $\int_M|\nabla^2
B|^2\rho<\infty$ and $\int_M|\nabla B|^p\rho<\infty$ for $0\leq
p\leq4$.
\end{pro}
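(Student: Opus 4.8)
The plan is to obtain both bounds by a bootstrap of weighted integration-by-parts estimates, climbing from the integrability of the Gaussian weight to the $L^2$-bound on $\nabla B$, then to the $L^2$-bound on $\nabla^2 B$, and finally coupling these to reach the $L^4$-bound on $\nabla B$. Throughout I use the cutoff $\eta=\eta_r(X)=\phi(|X|/r)$ from Proposition \ref{1gap}, which exhausts $M$ as $r\to\infty$ and satisfies $|\nabla\eta_r|\le C/r$ (since $|\nabla^M|X||=|X^T|/|X|\le1$). The starting observation is that the Euclidean volume growth of a properly immersed self-shrinker \cite{DX2} forces $\int_M\rho<\infty$.

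First I would control $\int_M|\nabla B|^2\rho$. Rewriting \eqref{LB1} as $2|\nabla B|^2=\mathcal{L}|B|^2-|B|^2+2|B|^4$, multiplying by $\eta^2$, integrating against $\rho$, and transferring $\mathcal{L}|B|^2=\rho^{-1}\mathrm{div}(\rho\nabla|B|^2)$ onto $\nabla(\eta^2)$ exactly as in \eqref{2.7}, I obtain, after using $|\nabla|B|^2|\le2|B|\,|\nabla B|$ and Young's inequality to absorb the arising $\int|\nabla B|^2\eta^2\rho$,
\begin{equation*}
\int_M|\nabla B|^2\eta^2\rho\le 4\int_M|B|^2|\nabla\eta|^2\rho+2\int_M|B|^4\eta^2\rho.
\end{equation*}
With $\eta=\eta_r$ the first term is $O(r^{-2})\sup_M|B|^2\int_M\rho\to0$ and the second is at most $2\sup_M|B|^4\int_M\rho<\infty$, so Fatou's lemma as $r\to\infty$ gives $\int_M|\nabla B|^2\rho<\infty$. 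Next, for $\int_M|\nabla^2 B|^2\rho=\int_M\sum h_{ijkl}^2\,\rho$, I rearrange the higher-order formula \eqref{2 diff} as $|\nabla^2 B|^2=\tfrac12\mathcal{L}|\nabla B|^2+(|B|^2-1)|\nabla B|^2+3\Xi+\tfrac32|\nabla|B|^2|^2$. Using \eqref{pt} for $3\Xi$ together with $|\nabla|B|^2|^2\le4|B|^2|\nabla B|^2$ and the boundedness of $|B|$, the last three terms are dominated by $C_1|\nabla B|^2$ with $C_1=C_1(\sup_M|B|)$. Testing against $\eta^2\rho$, integrating $\tfrac12\int(\mathcal{L}|\nabla B|^2)\eta^2\rho$ by parts, and using $|\nabla|\nabla B|^2|\le2|\nabla B|\,|\nabla^2 B|$ with Young to absorb half of $\int|\nabla^2 B|^2\eta^2\rho$, I arrive at
\begin{equation*}
\tfrac12\int_M|\nabla^2 B|^2\eta^2\rho\le 2\int_M|\nabla B|^2|\nabla\eta|^2\rho+C_1\int_M|\nabla B|^2\eta^2\rho.
\end{equation*}
Since $\int_M|\nabla B|^2\rho<\infty$, the right side remains bounded (the first term vanishes) as $r\to\infty$, yielding $\int_M|\nabla^2 B|^2\rho<\infty$; the absorption is legitimate because $\int_M|\nabla^2 B|^2\eta_r^2\rho<\infty$ for each fixed $r$ by compact support.

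The main obstacle is the $L^4$-bound $\int_M|\nabla B|^4\rho<\infty$, which genuinely needs the second-order bound as fuel. I would again invoke \eqref{LB1} in the form $|\nabla B|^2=\tfrac12\mathcal{L}|B|^2+|B|^4-\tfrac12|B|^2$ and write
\begin{equation*}
\int_M|\nabla B|^4\eta^2\rho=\int_M|\nabla B|^2\Big(\tfrac12\mathcal{L}|B|^2+|B|^4-\tfrac12|B|^2\Big)\eta^2\rho,
\end{equation*}
where the lower-order terms contribute at most $\sup_M|B|^4\int_M|\nabla B|^2\rho<\infty$. For the principal term $\tfrac12\int|\nabla B|^2(\mathcal{L}|B|^2)\eta^2\rho$ I integrate by parts, producing $-\tfrac12\int\rho\eta^2\nabla|B|^2\cdot\nabla|\nabla B|^2$ together with a $\nabla\eta$-term. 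The crucial estimate is $|\nabla|B|^2\cdot\nabla|\nabla B|^2|\le4|B|\,|\nabla B|^2|\nabla^2 B|$, so Young's inequality bounds the interior term by $\varepsilon\int|\nabla B|^4\eta^2\rho+C_\varepsilon\sup_M|B|^2\int_M|\nabla^2 B|^2\rho$; it is precisely the already-established $\int_M|\nabla^2 B|^2\rho<\infty$ that renders the second piece finite, while the first is absorbed. The $\nabla\eta$-term is handled the same way, being $O(r^{-2})$ plus an absorbable $\varepsilon$-contribution. Choosing $\varepsilon$ small and letting $r\to\infty$ gives $\int_M|\nabla B|^4\rho<\infty$. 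Finally, for $0\le p\le2$ Hölder's inequality against $\int_M\rho<\infty$ and $\int_M|\nabla B|^2\rho<\infty$ yields finiteness, and for $2\le p\le4$ Hölder interpolation between the $p=2$ and $p=4$ bounds closes the remaining range, establishing $\int_M|\nabla B|^p\rho<\infty$ for all $0\le p\le4$.
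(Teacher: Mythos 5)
Your proposal is correct and follows essentially the same route as the paper: both start from \eqref{LB1} with a cutoff to get $\int_M|\nabla B|^2\rho<\infty$, feed this into \eqref{2 diff} together with \eqref{pt} to get $\int_M|\nabla^2B|^2\rho<\infty$, and then multiply \eqref{LB1} by a power of $|\nabla B|$ times $\eta^2$ and use Young's inequality against the second-order bound to reach $\int_M|\nabla B|^4\rho<\infty$, finishing by H\"older. The only cosmetic difference is that the paper passes through the intermediate exponent $q=1$ (giving $\int_M|\nabla B|^3\rho<\infty$) before $q=2$, whereas you go directly to the $L^4$ bound and absorb the cutoff boundary term by splitting $|\nabla B|^3\eta|\nabla\eta|\le \varepsilon|\nabla B|^4\eta^2+C_\varepsilon|\nabla B|^2|\nabla\eta|^2$.
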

\begin{proof}
Let $\eta$ be an arbitrary smooth function with compact support on
$M$, by (\ref{LB1}) we have
\begin{equation}\aligned\label{4.1}
\int_M|\nabla B|^2\eta^2\rho=&\int_M|B|^2(|B|^2-\frac{1}{2})\eta^2\rho+\frac{1}{2}\int_M(\mathcal{L}|B|^2)\eta^2\rho\\
=&\int_M|B|^2(|B|^2-\frac{1}{2})\eta^2\rho-2\int_M(\nabla|B|\cdot\nabla\eta)|B|\eta\rho\\
\leq&\int_M|B|^2(|B|^2-\frac{1}{2})\eta^2\rho+\epsilon\int_M|\nabla|B||^2\eta^2\rho+\frac{1}{\epsilon}\int_M|B|^2|\nabla\eta|^2\rho.\\
\endaligned
\end{equation}
Since $|B|$ is bounded and $M$ has Euclidean volume growth \cite{DX2}, then
by \eqref{4.1}, we get
\begin{equation}\aligned\label{4.2}
\int_M|\nabla
B|^2\rho\le\int_M|B|^2(|B|^2-\frac{1}{2})\rho<\infty.
\endaligned
\end{equation}
Using this argument for \eqref{2 diff}, we get
\begin{equation}\aligned\label{4.3}
\int_M|\nabla^2 B|^2\rho\le&\int_M(|B|^2-1)|\nabla
B|^2\rho+3\int_M\sum_{i,j,k}h_{ijk}^2(\lambda^2_k-2\lambda_i\lambda_j)\rho+\frac{3}{2}\int_M|\nabla|B|^2|^2\rho\\
<&\infty.
\endaligned
\end{equation}
For any $q\geq0$, multiplying $|\nabla B|^q\eta^2$ on the both
sides of (\ref{LB1}), and integrating by parts, we obtain
\begin{equation}\aligned\label{4.4}
&\int_M|\nabla B|^{2+q}\eta^2\rho=\int_M|B|^2(|B|^2-\frac{1}{2})|\nabla B|^q\eta^2\rho+\frac{1}{2}\int_M(\mathcal{L}|B|^2)|\nabla B|^q\eta^2\rho\\
=&\int_M|B|^2(|B|^2-\frac{1}{2})|\nabla B|^q\eta^2\rho-\frac{1}{2}\int_M\nabla|B|^2\cdot\nabla(|\nabla B|^q\eta^2)\rho\\
\leq&\int_M|B|^2(|B|^2-\frac{1}{2})|\nabla B|^q\eta^2\rho+q\int_M|B|\cdot|\nabla B|^q|\nabla^2B|\eta^2\rho+\int_M|B|\cdot|\nabla B|^{q+1}|\nabla\eta^2|\rho\\
\le&C\int_M|\n B|^q\e^2\r+\f12\int_M|\n
B|^{2q}\e^2\r+C\int_M|\n^2 B|^2\e^2\r
+C\int_M|\nabla B|^{q+1}|\nabla\eta^2|\rho,\\
\endaligned
\end{equation}
where we have used Young's inequality in the last inequality of \eqref{4.4}. By \eqref{4.2} and \eqref{4.3}, we know
$\int_M|\nabla B|^{3}\rho<\infty$ for $q=1$ in \eqref{4.4} and
$\int_M|\nabla B|^{4}\rho<\infty$ for $q=2$ in \eqref{4.4}. By
H$\mathrm{\ddot{o}}$lder inequality, we get this Proposition.
\end{proof}

In what follows, we always denote $S=|B|^2.$ Define
$$f=\sum_{i,j}(\lambda_i-\lambda_j)^2\lambda_i^2\lambda_j^2,\quad f_3=\sum_i\la_i^3,\quad f_4=\sum_i\la_i^4,$$
where $h_{ij}=\la_i\de_{ij}$ at the considered point. Then
$$f=2(Sf_{4}-f_{ 3}^{ 2}).$$
It is a higher order invariant of the second fundamental form.

\begin{lem}\label{intpart}
$$\int_M \Xi\rho=\frac{1}{2}\int_M f\rho-\frac{1}{4}\int_M|\nabla S|^2\rho.$$
\end{lem}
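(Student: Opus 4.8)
The plan is to prove the identity by two successive integrations by parts against the weighted measure $\rho\,d\mu$, using only the Codazzi symmetry of $h_{ijk}$, the Ricci identity \eqref{Ric id} together with the Gauss equation $R_{ijkl}=h_{ik}h_{jl}-h_{il}h_{jk}$, and the self-shrinker relations. The basic tool is the weighted divergence theorem $\int_M \mathrm{div}(\rho V)\,d\mu=0$, valid for the vector fields below once they are truncated by a cutoff $\eta=\eta_r$ as in Proposition \ref{1gap}; the Euclidean volume growth of \cite{DX2} together with the boundedness of $|B|$ and the preceding Proposition (giving $\int_M|\nabla^2B|^2\rho<\infty$ and $\int_M|\nabla B|^p\rho<\infty$ for $p\le 4$) make all the integrals below converge and the cutoff error tend to zero. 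I write $\Xi=\Xi_1-2\Xi_2$ with $\Xi_1=\sum h_{ijk}h_{ijl}h_{km}h_{ml}$ and $\Xi_2=\sum h_{ijk}h_{klm}h_{im}h_{jl}$; at a point where $h_{ij}=\lambda_i\delta_{ij}$ these become $\Xi_1=P:=\sum_{i,j,k}\lambda_k^2 h_{ijk}^2$ and $\Xi_2=Q:=\sum_{i,j,k}\lambda_i\lambda_j h_{ijk}^2$, so that $\int_M\Xi\rho=\int_M P\rho-2\int_M Q\rho$.

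First I would integrate $\int_M\Xi_2\rho$ by parts, moving one covariant derivative off $h_{klm}=\nabla_m h_{kl}$. The divergence term carries a factor $\langle X,e_m\rangle$, while the term in which $\nabla_m$ falls on $h_{im}$ carries $\sum_m h_{imm}=\nabla_i H$. Invoking the self-shrinker relation $\nabla_i H=\tfrac12\langle X,e_l\rangle h_{il}$ from \eqref{DH}, each of these two contributions equals $\tfrac13\langle\nabla H,\nabla f_3\rangle$ integrated against $\rho$, so they cancel. What survives is the single second-order term, and a short computation in the diagonal gauge gives $\int_M Q\rho=-\tfrac12\int_M\alpha\rho$, where $\alpha=\sum_{i,k}\lambda_i\lambda_k^2\,h_{iikk}$, equivalently $\alpha=\sum h_{ijkm}h_{kl}h_{im}h_{jl}$.

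Next I would reduce $\alpha$. Commuting the last two indices of $h_{ikki}$ by \eqref{Ric id} and the Gauss equation, then reordering the first three indices by Codazzi, I expect $h_{ikki}=h_{iikk}+\lambda_i\lambda_k(\lambda_k-\lambda_i)$; contracting the curvature correction against $\lambda_i\lambda_k^2$ produces exactly $\sum_{i,k}\lambda_i^2\lambda_k^3(\lambda_k-\lambda_i)=Sf_4-f_3^2=\tfrac12 f$. The remaining piece $\int_M\sum h_{ij}(A^2)_{kl}h_{ijkl}\rho$ (with $(A^2)_{kl}=\sum_m h_{km}h_{ml}$, the invariant form of $\sum_{i,k}\lambda_i\lambda_k^2 h_{iikk}$) is then integrated by parts once more, moving $\nabla_l$ off $h_{ijkl}$. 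Here the term where $\nabla_l$ hits $h_{ij}$ returns $-\int_M P\rho$, since $\sum_{ij}h_{ij}h_{ijl}=\tfrac12\nabla_l S$; the term where it hits $(A^2)_{kl}$, together with $\sum_l h_{pll}=\nabla_p H$, produces $-\tfrac14\int_M|\nabla S|^2\rho$ plus an $H$-gradient term that cancels against the divergence term exactly as in the first step. This yields $\int_M\sum h_{ij}(A^2)_{kl}h_{ijkl}\rho=-\int_M P\rho-\tfrac14\int_M|\nabla S|^2\rho$, hence $\int_M\alpha\rho=\int_M(Sf_4-f_3^2)\rho-\int_M P\rho-\tfrac14\int_M|\nabla S|^2\rho$.

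Assembling the pieces, $\int_M Q\rho=\tfrac12\int_M P\rho+\tfrac18\int_M|\nabla S|^2\rho-\tfrac12\int_M(Sf_4-f_3^2)\rho$, so that $\int_M\Xi\rho=\int_M P\rho-2\int_M Q\rho=\int_M(Sf_4-f_3^2)\rho-\tfrac14\int_M|\nabla S|^2\rho=\tfrac12\int_M f\rho-\tfrac14\int_M|\nabla S|^2\rho$, using $f=2(Sf_4-f_3^2)$. The main obstacle is the bookkeeping in the third paragraph: commuting the covariant derivatives in $\alpha$ so that the Ricci/Gauss correction assembles precisely into $Sf_4-f_3^2$ (and not into stray traces such as $f_6$), and checking in both integrations by parts that every term carrying $\langle X,\cdot\rangle$ or $\nabla H$ cancels, leaving only the clean combination $\tfrac12 f-\tfrac14|\nabla S|^2$.
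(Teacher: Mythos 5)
Your argument is correct and is essentially the paper's own proof in a different packaging: your first integration by parts of $\Xi_2$ (using the weighted divergence-free identity $\sum_j(h_{ij}\rho)_j=0$ that the self-shrinker equation provides) lands on exactly the intermediate identity $2\int_M Q\,\rho=-\int_M\sum_{i,k}\lambda_i^2\lambda_k\,h_{iikk}\,\rho$ that the paper extracts from $\int_M\sum_{i,j}h_{ij}(f_3)_{ij}\,\rho=0$, and your Ricci--Gauss commutation producing $\tfrac12 f$ together with the second integration by parts yielding $-\int_M P\,\rho-\tfrac14\int_M|\nabla S|^2\rho$ reproduces the paper's remaining steps. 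The one slip is the diagonal-gauge display of $\alpha$, which should read $\sum_{i,k}\lambda_i\lambda_k^2\,h_{kkii}$ (i.e.\ $h_{ikki}$, as your commutation step then assumes), not $\sum_{i,k}\lambda_i\lambda_k^2\,h_{iikk}$ --- the two differ pointwise by exactly $\tfrac12 f$ --- but since your invariant expression $\sum h_{ijkm}h_{kl}h_{im}h_{jl}$ and the subsequent computation use the correct version, the conclusion is unaffected.
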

\begin{proof}
By Stokes formula, we have
\begin{equation}\aligned\label{4.5}
-\int_M\sum_{i,j}(h_{ij}\rho)_j(f_3)_i=\int_M\sum_{i,j}h_{ij}(f_3)_{ij}\rho.
\endaligned
\end{equation}
Since $H=-\f{\langle X,\nu\rangle}2$, then
\begin{equation}\aligned\label{4.6}
\sum_{j}(h_{ij}\rho)_j=\sum_jh_{jji}\rho-\sum_jh_{ij}\frac{\langle
X,e_j\rangle}{2}\rho=-\frac{e_i\langle
X,\nu\rangle}{2}\rho+\frac{\langle
X,\nabla_{e_i}\nu\rangle}{2}\rho=0,
\endaligned
\end{equation}
and combining the Ricci identity (\ref{Ric id})
$$t_{ij}=h_{ijij}-h_{jiji}=\la_i\la_j(\la_i-\la_j)\qquad \forall i, j,$$
we have
\begin{equation}\aligned\label{4.7}
&(f_3)_{ll}=\sum_{i,j,k}(3h_{ijl}h_{jk}h_{ki})_l=3\sum_{i,j,k}(h_{ijll}h_{jk}h_{ik}+h_{ijl}h_{kjl}h_{ki}+h_{ijl}h_{jk}h_{ikl})\\
=&3\sum_ih_{iill}\lambda_i^2+6\sum_{i,j}h_{ijl}^2\lambda_i
=3\sum_i\left(h_{llii}\lambda_i^2+\lambda_i^3\lambda_l(\lambda_i-\lambda_l)\right)+6\sum_{i,j}h_{ijl}^2\lambda_i.
\endaligned
    \end{equation}
Combining \eqref{4.5}-\eqref{4.7}, we get
\begin{equation}\aligned\label{4.8}
0&=\int_M\sum_{i,l}\left(h_{llii}\lambda_i^2\lambda_l+\lambda_i^3\lambda_l^2(\lambda_i-\lambda_l)\right)\rho+2\int_M\sum_{i,j,l}h_{ijl}^2\lambda_i\lambda_l\rho\\
&=\frac{1}{2}\int_M\sum_{i}\lambda_i^2S_{ii}\rho-\int_M\sum_{i,j,k}\lambda_i^2h_{jki}^2\rho+\int_M\sum_{i,l}\lambda_i^3\lambda_l^2(\lambda_i-\lambda_l)\rho
+2\int_M\sum_{i,j,l}h_{ijl}^2\lambda_i\lambda_l\rho\\
&=\frac{1}{2}\int_M\sum_{i,j,k}h_{ik}h_{jk}S_{ij}\rho-\int_M \Xi\rho+\frac{1}{2}\int_M f\rho.\\
\endaligned
\end{equation}
By Stokes formula and \eqref{4.6},
\begin{equation}\aligned\label{4.9}
\int_M\sum_{i,j,k}h_{ik}h_{jk}S_{ij}\rho=-\int_M\sum_{i,j,k}(h_{ik}h_{jk}\rho)_jS_{i}=-\int_M\sum_{i,j,k}h_{ijk}h_{jk}S_{i}\rho=-\frac{1}{2}\int_M|\nabla
S|^2\rho.
\endaligned
\end{equation}
Combining \eqref{4.8} and \eqref{4.9}, we complete the proof.
\end{proof}

\begin{lem}\label{ptest}
If notations are as above, then $3\Xi\leq (S+C_1f^{1/3})|\nabla
B|^2$, here $C_1=\frac{2\sqrt{6}+3}{\sqrt[3]{21\sqrt{6}+103/2}}$.
\end{lem}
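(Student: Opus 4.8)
The plan is to reduce the claimed pointwise inequality to a one–variable algebraic extremal problem. Since the codimension is $1$, the Codazzi equation makes $h_{ijk}$ totally symmetric in $i,j,k$, so in the expression $\Xi=\sum_{i,j,k}h_{ijk}^2(\lambda_k^2-2\lambda_i\lambda_j)$ from the Remark I may symmetrize the coefficient against the symmetric weight $h_{ijk}^2$. Averaging $\lambda_k^2-2\lambda_i\lambda_j$ over the six permutations of $(i,j,k)$ gives $\tfrac13\big[(\lambda_i^2+\lambda_j^2+\lambda_k^2)-2(\lambda_i\lambda_j+\lambda_j\lambda_k+\lambda_k\lambda_i)\big]$, hence
\[
3\Xi=\sum_{i,j,k}h_{ijk}^2\,Q(\lambda_i,\lambda_j,\lambda_k),\qquad Q(a,b,c):=a^2+b^2+c^2-2(ab+bc+ca).
\]
Because $h_{ijk}^2\ge 0$ and $|\nabla B|^2=\sum_{i,j,k}h_{ijk}^2$, it suffices to prove the termwise bound $Q(\lambda_i,\lambda_j,\lambda_k)\le S+C_1 f^{1/3}$ for every triple $(i,j,k)$; summing this against $h_{ijk}^2\ge0$ then gives the Lemma.

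Next I would turn the termwise bound into pure algebra in the eigenvalues. For a fixed triple, enlarging any of the remaining eigenvalues from $0$ only increases both $S=\sum_p\lambda_p^2$ and $f=\sum_{p,q}(\lambda_p-\lambda_q)^2\lambda_p^2\lambda_q^2$ while leaving $Q$ unchanged, so the worst case occurs when all eigenvalues outside the triple vanish. There are three patterns of index coincidence. If $i=j=k$ then $Q=-3\lambda_i^2\le 0\le S+C_1f^{1/3}$ and nothing is needed. If $i,j,k$ are distinct then $Q-S\le-2(\lambda_i\lambda_j+\lambda_j\lambda_k+\lambda_k\lambda_i)$ and $f\ge 2\big[(\lambda_i-\lambda_j)^2\lambda_i^2\lambda_j^2+(\lambda_j-\lambda_k)^2\lambda_j^2\lambda_k^2+(\lambda_k-\lambda_i)^2\lambda_k^2\lambda_i^2\big]$. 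If exactly two indices coincide, say $i=j\ne k$, then $Q(\lambda_i,\lambda_i,\lambda_k)=\lambda_k^2-4\lambda_i\lambda_k$, so $Q-S\le-\lambda_i^2-4\lambda_i\lambda_k$ and $f\ge 2(\lambda_i-\lambda_k)^2\lambda_i^2\lambda_k^2$. In each case the surviving inequality is homogeneous of degree $2$ on both sides, so after normalization it becomes a single–variable maximization.

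The decisive case is the two–index one. Writing $\lambda_i=1$ and $\lambda_k=-w$ (the opposite–sign regime is the only one making the left side positive, and $(a,c)\mapsto(-a,-c)$ leaves everything invariant), the required inequality becomes $4w-1\le C_1\,[2(1+w)^2w^2]^{1/3}$, i.e. I must show
\[
R(w):=\frac{4w-1}{\big[2(1+w)^2w^2\big]^{1/3}}\le C_1\qquad(w>\tfrac14).
\]
Setting $R'=0$ yields $12w(1+w)=2(2w+1)(4w-1)$, i.e. $2w^2-4w-1=0$, whose positive root is $w_\star=1+\tfrac{\sqrt6}{2}$. At $w_\star$ the numerator equals $4w_\star-1=2\sqrt6+3$ and $2(1+w_\star)^2w_\star^2=21\sqrt6+\tfrac{103}{2}$, so $R(w_\star)=C_1$ exactly; this is precisely how the constant $C_1$ in the statement arises, and it shows the two–index estimate is sharp.

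Finally I would check that the distinct–index case needs no larger constant. Its boundary sub–configurations reduce to ratios such as $(4w-2)/[4(1+w)^2w^2]^{1/3}$ (two equal eigenvalues $\lambda_i=\lambda_j$) and $2^{2/3}w^{1/3}/(1+w)^{2/3}$ (one eigenvalue $0$, whose maximum is $1$); each is dominated by $R(w)$ because it has a strictly smaller numerator and/or a strictly larger denominator. For the genuinely three–distinct interior, Lagrange multipliers (or a short numerical check) confirm the supremum stays below $C_1$. I expect the main obstacle to be exactly this last comparison: one must verify that among all coincidence patterns and sign choices the two–index configuration is the binding one, so that the single constant $C_1$ governs every term; granting that, the termwise bound holds and summing against $h_{ijk}^2\ge 0$ gives $3\Xi\le(S+C_1f^{1/3})|\nabla B|^2$.
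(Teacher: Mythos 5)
Your strategy coincides with the paper's: both split $3\Xi$ according to the coincidence pattern of $(i,j,k)$ (your symmetrization to $Q(\lambda_i,\lambda_j,\lambda_k)=\sum\lambda^2-2\sum\lambda\lambda$ is just a tidier bookkeeping of the paper's grouping into $\sum_{distinct}$ and $3\sum_{i\ne j}h_{iij}^2(\lambda_j^2-4\lambda_i\lambda_j)$), and both identify the two-coincident-index configuration as the one that determines $C_1$. Your one-variable extremal computation there is correct: $2w^2-4w-1=0$, $w_\star=1+\sqrt{6}/2$, $4w_\star-1=2\sqrt6+3$, $2(1+w_\star)^2w_\star^2=21\sqrt6+103/2$, exactly reproducing the constant (the paper phrases this as minimizing $\zeta(x)=x^2(1+x)^2(4x-1)^{-3}$, which is the same optimization).

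However, there is a genuine gap in your treatment of the three-distinct-index case. You verify only two boundary sub-configurations and then assert that "Lagrange multipliers (or a short numerical check) confirm the supremum stays below $C_1$" for the interior; you even flag this comparison as the main obstacle. That step is not proved, and it is precisely the point where the paper has a short, complete argument that you are missing. Namely, since $(\lambda_i\lambda_j)(\lambda_j\lambda_k)(\lambda_k\lambda_i)=(\lambda_i\lambda_j\lambda_k)^2\ge0$, at least one of the three products is nonnegative, so the (at most two) negative products share a common index, say $\lambda_i\lambda_j\le0$ and $\lambda_i\lambda_k\le0$. For a nonpositive product, $2|\lambda_i\lambda_j|\le\tfrac12(\lambda_i-\lambda_j)^2$, hence $|\lambda_i\lambda_j|^3\le\tfrac14(\lambda_i-\lambda_j)^2\lambda_i^2\lambda_j^2$, and therefore
\begin{equation*}
\bigl(|\lambda_i\lambda_j|+|\lambda_i\lambda_k|\bigr)^3\le 4\bigl(|\lambda_i\lambda_j|^3+|\lambda_i\lambda_k|^3\bigr)\le \frac{f}{2},
\end{equation*}
which gives $-2(\lambda_i\lambda_j+\lambda_j\lambda_k+\lambda_k\lambda_i)\le\sqrt[3]{4}\,f^{1/3}$. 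Since $\sqrt[3]{4}\approx1.587<C_1\approx1.686$, the distinct-index case is strictly dominated and the single constant $C_1$ from the two-index case governs every term. With this inserted, your argument closes and is essentially the paper's proof.
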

\begin{proof}
For any three distinct positive integers $i,j,k\in\{1,\cdots,n\}$,
if $\la_i\la_j\le0$ and $\la_i\la_k\le0$, then by Cauchy inequality,
$2|\lambda_i\lambda_j|\leq\frac{1}{2}(\lambda_i-\lambda_j)^2$ which
implies
$|\lambda_i\lambda_j|^3\leq\frac{1}{4}(\lambda_i-\lambda_j)^2\lambda_i^2\lambda_j^2,$
and
$$(|\lambda_i\lambda_j|+|\lambda_i\lambda_k|)^3\leq4(|\lambda_i\lambda_j|^3+|\lambda_i\lambda_k|^3)\leq
(\lambda_i-\lambda_j)^2\lambda_i^2\lambda_j^2+(\lambda_i-\lambda_k)^2\lambda_i^2\lambda_k^2\leq\frac{f}{2}.$$

Since there must be a nonnegative number in three number
$\{\lambda_i\lambda_j,\lambda_j\lambda_k,\lambda_i\lambda_k\}$,
we always have
\begin{equation}\aligned\label{4.10}
-(\lambda_i\lambda_j+\lambda_j\lambda_k+\lambda_i\lambda_k)\leq
(\frac{f}{2})^{1/3}.
\endaligned
\end{equation}

On the other hand, by a simple computation, the function
$\zeta(x)\triangleq x^2(1+x)^2(4x-1)^{-3}$ on
$(\frac{1}{4},+\infty)$ attains its minimum at
$x=1+\sqrt{\frac{3}{2}}$. If $\lambda_j=-x\lambda_i$, then
\begin{equation}\aligned\label{4.11}
(-\lambda_i^2-4\lambda_i\lambda_j)^3&=(4x-1)^3\lambda_i^6\leq\f{x^2(1+x)^2}{\zeta(1+\sqrt{1.5})}\lambda_i^6\\
&=\f1{\zeta(1+\sqrt{1.5})}(\lambda_i-\lambda_j)^2\lambda_i^2\lambda_j^2\leq\frac{f}{2\zeta(1+\sqrt{1.5})}.
\endaligned
\end{equation}
Let $C_1=\frac{2\sqrt{6}+3}{\sqrt[3]{21\sqrt{6}+51.5}}$, by the
definition of $\Xi$ and \eqref{4.10}, \eqref{4.11}, we have
\begin{equation}\aligned\nonumber
3\Xi\leq&\sum_{i,j,k\ distinct}h_{ijk}^2(\lambda_i^2+\lambda_j^2+\lambda_k^2-2(\lambda_i\lambda_j+\lambda_j\lambda_k+\lambda_i\lambda_k))+3\sum_{j,i\neq j}h_{iij}^2(\lambda_j^2-4\lambda_i\lambda_j)\\
\leq&\sum_{i,j,k\ distinct}h_{ijk}^2(S+\sqrt[3]{4}f^{1/3})+3\sum_{j,i\neq j}h_{iij}^2(\lambda_i^2+\lambda_j^2+\sqrt[3]{\frac{f}{2\zeta(1+\sqrt{1.5})}}\ )\\
\leq&(S+C_1f^{1/3})|\nabla B|^2.
\endaligned
\end{equation}
\end{proof}
By the previous definition of
 $t_{ij}$,
\begin{equation}\aligned\label{4.12}
\sum_{i,j,k,l}h_{ijkl}^2\geq&3\sum_{i\neq j}h_{ijij}^2=3\sum_{i< j}(h_{ijij}^2+(h_{ijij}-t_{ij})^2)\\
=&3\sum_{i\neq
j}(h_{ijij}-\frac{t_{ij}}{2})^2+\frac{3}{4}\sum_{i\neq
j}t_{ij}^2\geq\frac{3}{4}\sum_{i\neq
j}(\lambda_i-\lambda_j)^2\lambda_i^2\lambda_j^2.
\endaligned
\end{equation}

Now, we are in a position to prove a second gap property for
self-shrinkers.

\begin{thm}\label{sec}
Suppose that $M^n$ is a complete properly immersed self-shrinker in
$\R^{n+1}$, there exists a positive number $\delta=0.011$ such that
if $\frac{1}{2}\leq|B|^2\leq\frac{1}{2}+\delta$, then
$|B|^2\equiv\frac{1}{2}$.
\end{thm}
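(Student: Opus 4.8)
The plan is to show that the pinching forces $\nabla B\equiv 0$; once $B$ is parallel, Lawson's splitting theorem together with the self-shrinker equation \eqref{SS}, exactly as at the end of the proof of Proposition \ref{1gap}, identifies $M$ with $S^k(\sqrt{2k})\times\R^{n-k}$, whence $|B|^2\equiv\frac12$ (the hyperplane being excluded by $|B|^2\ge\frac12$). Throughout I write $S=|B|^2$ and $g=S-\frac12$, so $0\le g\le\delta$, and I integrate every identity against $\rho\,d\mu$. Since the pinching makes $|B|$ bounded, the finiteness results established above give $\int_M|\nabla^2B|^2\rho<\infty$ and $\int_M|\nabla B|^p\rho<\infty$ for $0\le p\le4$; combined with the Euclidean volume growth of \cite{DX2} and the cut-offs $\eta_r$ of \eqref{eta}, this lets me integrate the Simons-type identities over all of $M$, invoke the self-adjointness of $\mathcal L$ (so that $\int_M\mathcal L u\,\rho=0$ for the relevant $u$), and discard the $|\nabla\eta_r|$ terms as $r\to\infty$, just as in Proposition \ref{1gap} and Theorem \ref{int}.

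First I would extract the low-order consequences of \eqref{LB1}. Integrating \eqref{LB1} against $\rho$ yields the global identity
\begin{equation*}
\int_M|\nabla B|^2\rho=\int_M S\Big(S-\tfrac12\Big)\rho=\int_M Sg\,\rho,
\end{equation*}
and multiplying \eqref{LB1} by $g$ before integrating gives $\int_M|\nabla S|^2\rho=2\int_M Sg^2\rho-2\int_M g|\nabla B|^2\rho$. Because $g\le\delta$, both identities show that $\int_M|\nabla S|^2\rho$ and $\int_M g|\nabla B|^2\rho$ are $O(\delta)\int_M|\nabla B|^2\rho$, so they will absorb all the lower-order error terms appearing below.

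The heart of the argument is the higher-order Simons identity \eqref{2 diff}. Integrating it against $\rho$ and using $\int_M\mathcal L|\nabla B|^2\rho=0$ gives $\int_M\sum h_{ijkl}^2\rho=\int_M(S-1)|\nabla B|^2\rho+3\int_M\Xi\rho+\frac32\int_M|\nabla S|^2\rho$. I then feed in the lower bound $\int_M\sum h_{ijkl}^2\rho\ge\frac34\int_M f\rho$ from \eqref{4.12}, the exact value of $\int_M\Xi\rho$ from Lemma \ref{intpart}, and the pointwise estimates \eqref{pt} and Lemma \ref{ptest}, which control $3\Xi$ (and, through Lemma \ref{intpart}, also $\int_M f\rho$) by $S|\nabla B|^2$ and $f^{1/3}|\nabla B|^2$. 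Absorbing the term $f^{1/3}|\nabla B|^2$ by Hölder/Young against $f$ and against the finite norms of $\nabla B$, and replacing $\int_M|\nabla S|^2\rho$ and $\int_M g|\nabla B|^2\rho$ by their $O(\delta)$ bounds from the previous paragraph, the goal is to reach a single inequality $c(\delta)\int_M|\nabla B|^2\rho\le0$ whose coefficient $c(\delta)$ is the difference between a universal geometric constant (built from \eqref{4.12}, \eqref{pt}, Lemma \ref{ptest} and $C_1$) and a fixed multiple of $\delta$; the numerics are to be arranged so that $c(\delta)>0$ for $\delta\le0.011$, forcing $\int_M|\nabla B|^2\rho=0$.

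The hard part will be precisely this last step: tracking the sharp constants. Since $f$ is an independent algebraic invariant of the eigenvalues that need not be small when $g$ is small, it cannot be dominated pointwise by $g$, so the required cancellation must be produced \emph{integrally}, by playing the lower bound \eqref{4.12} for $\int_M\sum h_{ijkl}^2\rho$ against the pointwise control of $3\Xi$ and the identity of Lemma \ref{intpart}. The delicate point is to choose the Young's-inequality splittings and to combine the two pointwise bounds \eqref{pt} and Lemma \ref{ptest}, together with a Kato-type refinement $|\nabla B|^2\ge|\nabla|B||^2$ and the traceless decomposition recorded above, so that the surviving coefficient of $\int_M|\nabla B|^2\rho$ remains strictly positive up to the stated threshold $\delta=0.011$; a coarser grouping of terms would only certify a smaller admissible $\delta$, or fail to close at all.
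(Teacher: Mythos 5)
Your outline follows the paper's own strategy almost exactly: integrate \eqref{2 diff} against $\rho$, bound $\int_M\sum h_{ijkl}^2\rho$ from below by $\tfrac34\int_Mf\rho$ via \eqref{4.12}, play Lemma \ref{intpart} against the pointwise bounds \eqref{pt} and Lemma \ref{ptest}, convert $\int_M|\nabla S|^2\rho$ and $\int_Mg|\nabla B|^2\rho$ into $O(\delta)\int_M|\nabla B|^2\rho$ using $\int_M|\nabla B|^2\rho=\int_MS(S-\tfrac12)\rho$, and close with a sign argument on the resulting coefficient. The preliminary identities you state are correct, and the final reduction ($\nabla B\equiv0$ plus \eqref{LB1} and $|B|^2\ge\tfrac12$ forces $|B|^2\equiv\tfrac12$) is fine.

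However, there is a genuine gap at exactly the point you defer. After Young's inequality splits $C_1f^{1/3}|\nabla B|^2$ into a multiple of $f$ (absorbed by the $\tfrac34(1-\theta)\int_Mf\rho$ left over from \eqref{4.12} and Lemma \ref{intpart}) and a multiple of $|\nabla B|^3$, you propose to handle $\int_M|\nabla B|^3\rho$ ``against the finite norms of $\nabla B$.'' Finiteness of $\int_M|\nabla B|^3\rho$ gives you nothing: to reach $c(\delta)\int_M|\nabla B|^2\rho\le0$ you must show that $\int_M|\nabla B|^3\rho$ is itself bounded by $O(\sqrt{\delta})\int_M|\nabla B|^2\rho$ plus terms already under control. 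This requires a separate argument that your plan omits entirely: multiply \eqref{LB1} by $|\nabla B|$ and integrate by parts (the paper's \eqref{4.15}), which produces $\int_MS(S-\tfrac12)|\nabla B|\rho$, an $\epsilon\int_M|\nabla^2B|^2\rho$ term that must be re-estimated through \eqref{2 diff} and \eqref{pt} (this is the only place \eqref{pt} enters --- not as an alternative bound on $3\Xi$ in the main inequality), and a Cauchy--Schwarz step \eqref{4.19} exploiting $\int_MS(S-\tfrac12)\rho=\int_M|\nabla B|^2\rho$ with a weight $\epsilon\sim\sqrt{\delta}$. Two further consequences: the final coefficient is a universal constant minus a fixed multiple of $\sqrt{\delta}$ (not of $\delta$, as you assert), and the universal constant itself is \emph{not} automatic --- using Lemma \ref{ptest} alone gives leading coefficient $2S-1=0$ at $S=\tfrac12$, so strict negativity is manufactured only by the $\theta$-interpolation between the exact identity of Lemma \ref{intpart} and the pointwise bound, with $\theta$ constrained by the blow-up of the Young constant $C_2(\theta)$ as $\theta\to1$. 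You gesture at this tension but do not resolve it, and since the entire content of the theorem is the explicit threshold $\delta=0.011$, the proof cannot be considered complete without these steps. (Incidentally, the Kato inequality and the traceless decomposition you list as ingredients play no role here.)
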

\begin{proof}
By \eqref{2 diff}, \eqref{4.12} and Lemma \ref{intpart},
\ref{ptest}, for some fixed $0<\theta<1$ to be defined later, we
have
\begin{equation}\aligned\label{4.13}
&\frac{3}{4}(1-\theta)\int_Mf\rho+\frac{3}{8}\theta\int_M|\nabla S|^2\rho\\
=&\frac{3}{4}\int_Mf\rho-\frac{3}{2}\theta\int_M\Xi\rho
\leq\int_M|\nabla^2B|^2\rho-\frac{3}{2}\theta\int_M\Xi\rho\\
=&\int_M(S-1)|\nabla B|^2\rho+3(1-\frac{\theta}{2})\int_M\Xi\rho+\frac{3}{2}\int_M|\nabla S|^2\rho\\
\leq&\int_M(S-1)|\nabla B|^2\rho+(1-\frac{\theta}{2})\int_M(S+C_1f^{1/3})|\nabla B|^2\rho+\frac{3}{2}\int_M|\nabla S|^2\rho\\
\leq&\int_M((2-\frac{\theta}{2})S-1)|\nabla B|^2\rho+\frac{3}{2}\int_M|\nabla S|^2\rho+\frac{3}{4}(1-\theta)\int_Mf\rho\\
&+\frac{4}{9}C_1^\frac{3}{2}(1-\frac{\theta}{2})^\frac{3}{2}(1-\theta)^{-\frac{1}{2}}\int_M|\nabla B|^3\rho,\\
\endaligned
\end{equation}
where we have used Young's inequality in the last step of the above
inequality, then
\begin{equation}\aligned\label{4.14}
0\leq&\int_M((2-\frac{\theta}{2})S-1)|\nabla
B|^2\rho+(\frac{3}{2}-\frac{3\theta}{8})\int_M|\nabla
S|^2\rho+C_2(n,\theta)\int_M|\nabla B|^3\rho,
\endaligned
\end{equation}
where
$C_2=C_2(n,\theta)=\frac{4}{9}C_1^\frac{3}{2}(1-\frac{\theta}{2})^\frac{3}{2}(1-\theta)^{-\frac{1}{2}}.$

By \eqref{LB1}, for some $\epsilon>0$ to be defined later, we have
\begin{equation}\aligned\label{4.15}
\int_M|\nabla B|^3\rho=&\int_MS(S-\frac{1}{2})|\nabla B|\rho+\frac{1}{2}\int_M(|\nabla B|\mathcal{L}S)\rho\\
=&\int_MS(S-\frac{1}{2})|\nabla B|\rho-\frac{1}{2}\int_M(\nabla|\nabla B|\cdot\nabla S)\rho\\
\leq&\int_MS(S-\frac{1}{2})|\nabla B|\rho+\epsilon\int_M|\nabla^2
B|^2\rho+\frac{1}{16\epsilon}\int_M |\nabla S|^2\rho.
\endaligned
\end{equation}
Combining \eqref{2 diff} and \eqref{pt}, we obtain
\begin{equation}\aligned
\int_M|\nabla^2B|^2\rho\leq\int_M(\frac{\sqrt{17}+3}{2}S-1)|\nabla
B|^2\rho+\frac{3}{2}\int_M|\nabla S|^2\rho,
\endaligned\nonumber
\end{equation}
with the help of the above inequality, \eqref{4.15} becomes
\begin{equation}\aligned\label{4.16}
\int_M|\nabla B|^3\rho\leq&\int_MS(S-\frac{1}{2})|\nabla B|\rho+\epsilon\int_M(\frac{\sqrt{17}+3}{2}S-1)|\nabla B|^2\rho\\
&+(\frac{3\epsilon}{2}+\frac{1}{16\epsilon})\int_M |\nabla S|^2\rho.
\endaligned
\end{equation}
Multiplying $S$ on the both sides of (\ref{LB1}), and integrating
by parts, we see
\begin{equation}\aligned\label{4.17}
\frac{1}{2}\int_M|\nabla S|^2\rho=&\int_MS^2(S-\frac{1}{2})\rho-\int_MS|\nabla B|^2\rho\\
=&\int_MS(S-\frac{1}{2})^2\rho+\frac{1}{2}\int_MS(S-\frac{1}{2})\rho-\int_MS|\nabla B|^2\rho\\
=&\int_M(\frac{1}{2}-S)|\nabla
B|^2\rho+\int_MS(S-\frac{1}{2})^2\rho.
\endaligned
\end{equation}
Combining \eqref{4.14}, \eqref{4.16} and \eqref{4.17}, we get
\begin{equation}\aligned\label{4.18}
0\leq&\int_M\left((2-\frac{\theta}{2})S-1+C_2\epsilon(\frac{\sqrt{17}+3}{2}S-1)\right)|\nabla B|^2\rho
+C_2\int_MS(S-\frac{1}{2})|\nabla B|\rho\\
&+\left(\frac{3}{2}-\frac{3\theta}{8}+C_2(\frac{3\epsilon}{2}+\frac{1}{16\epsilon})\right)\int_M|\nabla S|^2\rho\\
=&\int_M\left((2-\frac{\theta}{2})S-1+C_2\epsilon(\frac{\sqrt{17}+3}{2}S-1)\right)|\nabla B|^2\rho+C_2\int_MS(S-\frac{1}{2})|\nabla B|\rho\\
&+\left(3-\frac{3\theta}{4}+C_2(3\epsilon+\frac{1}{8\epsilon})\right)\left(\int_M(\frac{1}{2}-S)|\nabla B|^2\rho+\int_MS(S-\frac{1}{2})^2\rho\right)\\
=&\int_M\left(-\frac{\theta}{4}+\frac{\sqrt{17}-1}{4}C_2\epsilon-(1-\frac{\theta}{4}-\frac{\sqrt{17}-3}{2}C_2\epsilon+\frac{C_2}{8\epsilon})
(S-\frac{1}{2})\right)|\nabla B|^2\rho\\
&+C_2\int_MS(S-\frac{1}{2})|\nabla B|\rho+\left(3-\frac{3\theta}{4}+C_2(3\epsilon+\frac{1}{8\epsilon})\right)\int_MS(S-\frac{1}{2})^2\rho.\\
\endaligned
\end{equation}
By Cauchy-Schwartz inequality and \eqref{LB1}, we have
\begin{equation}\aligned\label{4.19}
&\int_MS(S-\frac{1}{2})|\nabla B|\rho\\
\leq&2(\frac{1}{2}+\delta)\epsilon\int_MS(S-\frac{1}{2})\rho+\frac{1}{8(1/2+\delta)\epsilon}\int_MS(S-\frac{1}{2})|\nabla B|^2\rho\\
=&\int_M\left((1+2\delta)\epsilon+\frac{S(S-1/2)}{8(1/2+\delta)\epsilon}\right)|\nabla
B|^2\rho\\
\leq&\int_M\left((1+2\delta)\epsilon+\frac{S-1/2}{8\epsilon}\right)|\nabla
B|^2\rho.
\endaligned
\end{equation}
Combining (\ref{LB1}), \eqref{4.18} and \eqref{4.19}, we have
\begin{equation}\aligned
0\leq&\int_M\left(-\frac{\theta}{4}+\frac{\sqrt{17}-1}{4}C_2\epsilon-(1-\frac{\theta}{4}-\frac{\sqrt{17}-3}{2}C_2\epsilon+\frac{C_2}{8\epsilon})
(S-\frac{1}{2})\right)|\nabla B|^2\rho\\
&+C_2\int_M\left((1+2\delta)\epsilon+\frac{S-1/2}{8\epsilon}\right)|\nabla B|^2\rho\\
&+\left(3-\frac{3\theta}{4}+C_2(3\epsilon+\frac{1}{8\epsilon})\right)\delta\int_MS(S-\frac{1}{2})\rho\\
=&\int_M\left(-\frac{\theta}{4}+C_2\epsilon(\frac{\sqrt{17}+3}{4}+5\delta)+(3-\frac{3}{4}\theta)\delta+\frac{C_2\delta}{8\epsilon}\right)|\nabla B|^2\rho\\
&-\int_M(1-\frac{\theta}{4}-\frac{\sqrt{17}-3}{2}C_2\epsilon)
(S-\frac{1}{2})|\nabla B|^2\rho.\\
\endaligned
\end{equation}
Let $\epsilon=\sqrt{\frac{\delta}{2(\sqrt{17}+3)+40\delta}}$,
$\theta=1/2$, then $C_2=\f{\sqrt{6}}{6}C_1^{\f{3}{2}}\leq0.8933$,
and $\frac{7}{8}-\frac{\sqrt{17}-3}{2}C_2\epsilon>0$,
\begin{equation}\aligned
0\leq(-\frac{1}{8}+\frac{0.8933}{\sqrt{2}}\sqrt{\delta(\frac{\sqrt{17}+3}{4}+5\delta)}+\frac{21}{8}\delta)\int_M|\nabla B|^2\rho.\\
\endaligned
\end{equation}
If we choose $\delta=0.011$, then $|\nabla B|\equiv0$.
\end{proof}

\bigskip

\bibliographystyle{amsplain}

\begin{thebibliography}{10}

\bibitem{AL} U.Abresch and J.Langer, The normalized curve shortening flow and homothetic solutions,
J.Differential Geometry. {\bf 23} (1986), 175-196.

\bibitem{CaL} Huai-Dong Cao and Haizhong Li, A Gap Theorem for Self-shrinkers of the Mean Curvature Flow
in Arbitrary Codimension, arXiv 1101.0516v1, 2011.

\bibitem{ChenX} Qing Chen and Senlin Xu, Rigidity of compact minimal submanifolds in a unit sphere, Geom. Dedicata, {\bf 45(1)} (1993), 83-88.

\bibitem{CDK} S. S. Chern, M. do Carmo, and S. Kobayashi: Minimal
submanifolds of constant length, in Functional Analysis and
Related Fields (F. E. Browder, ed.), Springer, New York (1970).




\bibitem{CM1} Tobias H.Colding and William P.Minicozzi II, Generic Mean Curvature Flow I; Generic Singularities, arXiv:0908.3788, Aug.26,  (2009).

\bibitem{CM2} Tobias H.Colding and William P.Minicozzi II, Smooth Compactness of Self-shrinkers, Arxiv preprint arXiv:0907.2594, (2009).


\bibitem{DX1} Qi Ding and Y.L.Xin, On Chern's problem for rigidity of minimal
hypersurfaces in the spheres, Adv. Math. 227 (2011), 131-145.

\bibitem{DX2} Qi Ding and Y.L.Xin, volume growth, eigenvalue and compactness for self-shrinkers, arXiv:1101.1411v1, 2011.

\bibitem{E} Klaus Ecker, Logarithmic Sobolev inequalities on submanifolds of Euclidean space, J. reine angew. Math., {\bf 522} (2000), 105-118.






\bibitem{Hu} Gerhard Huisken, Flow by mean curvature of convex surfaces into spheres, J. Differential Geom., {\bf 20} (1984), no.1, 237-266.

\bibitem{H1} Gerhard Huisken, Asymptotic Behavior for Singularities of the Mean Curvature Flow, J. Differential Geom., {\bf 31} (1990), 285-299.

\bibitem{H2} Gerhard Huisken, Local and global behaviour of hypersurfaces moving by mean curvature. Differential geometry:
partial differential equations on manifolds (Los Angeles, CA,
1990), Proc. Sympos. Pure Math.,{\bf 54}, Part 1, Amer. Math.
Soc., Providence, RI, (1993), 175-191.


\bibitem{L} H.B.Lawson, Jr., Local rigidity theorems for minimal hypersurfaces, Ann. of Math., {\bf 89(2)} (1969), 187-197.

\bibitem{LL} An-min Li and Jimin Li , An Intrinsic Rigidity Theorem for Minimal Submanifolds in a Sphere, Arch. Math.,
{\bf 58} (1992), 582-594.


\bibitem{L-S} Nam Q. Le and N. Sesum, Blow-up rate of the mean
curvature during the mean curvature flow and a gap theorem for
self-shrinkers, arXiv:1011.5242, 2010.



\bibitem{MS} J.Michael and L.M.Simon, Sobolev and mean-vaule inequalities on generalized submanifolds of $\R^n$, Comm. Pure Appl. Math., {\bf 26} (1973), 361-379.


\bibitem{N} Lei Ni, Gap theorems for minimal submanifolds in $\R^{n+1}$, Comm. Analy. Geom., {\bf 9(3)} (2001), 641-656.


\bibitem{PT1} C.K.Peng and C.L.Terng,  Minimal hypersurfaces of sphere with constant scalar curvature, Ann. of Math. Stud. {\bf 103} (1983), 177-198.

\bibitem{PT2} C.K.Peng and C.L.Terng, The scalar curvature of minimal hypersurfaces in spheres, Math. Ann. {\bf 266(1)} (1983), 105-113.


\bibitem{Si} J. Simons, Minimal varieties in Riemannian manifolds, Ann. Math. {\bf 88} (1968), 62-105.

\bibitem{Sm} Knut Smoczyk, Self-Shrinkers of the Mean Curvature Flow in Arbitrary Codimension, International Mathematics Research Notices, {\bf 2005(48)}, (2005) 2983-3004.


\bibitem{X1} Y. L. Xin, Mean Curvature Flow With Convex Gauss Image, Chinese Annals of Mathematics-Series B, {\bf 29(2)}, (2008), 121-134.

\bibitem{X2} Y. L. Xin, Minimal Submanifolds and Related Topics, World Scientific Publ., (2003).


\bibitem{Y} S.T.Yau, Submanifolds with constant mean curvature, Amer.J.Math., {\bf 96(2)}(1974), 346-366.





\end{thebibliography}

\end{document}